\documentclass[final,leqno,showlabe]{siamltex}
\usepackage{amsmath}
\topmargin=-1.0cm
\usepackage{stmaryrd}
\usepackage{booktabs}
\usepackage{cite}
\usepackage{amssymb}
\usepackage{verbatim}
\usepackage[all,cmtip]{xy}
\usepackage[top=3.54cm,bottom=4.54cm,left=2.94cm,right=2.94cm ]{geometry}

\renewcommand{\Bbb}{\mathbb}

\newcommand{\Td}{{\cal D}}
\newcommand{\Te}{{\cal E}}
\newcommand{\Tf}{{\cal F}}
\newcommand{\Tt}{{\cal T}}

\newcommand{\p}{\partial}

\newcommand{\nn}{\nonumber}

\newcommand{\vep}{\varepsilon}

\renewcommand{\theequation}{\arabic{section}.\arabic{equation}}
\newcommand{\be}{\begin{equation}}
\newcommand{\ee}{\end{equation}}
\newcommand{\ba}{\begin{array}}
\newcommand{\ea}{\end{array}}
\newcommand{\bea}{\begin{eqnarray}}
\newcommand{\eea}{\end{eqnarray}}
\newcommand{\beas}{\begin{eqnarray*}}
\newcommand{\eeas}{\end{eqnarray*}}

\newtheorem{remark}{Remark}[section]

 \newcommand{\bx}{{\bf x} }
\newcommand{\sinc}{{\rm sinc}}
\newcommand{\bd}{{\beta}}

\title{Uniform and optimal error estimates of an exponential wave
 integrator sine pseudospectral method for
the nonlinear Schr\"{o}dinger equation with wave operator\thanks{This
work was supported by the Singapore A*STAR SERC  PSF-Grant 1321202067 (W. Bao)
and the KI-NET at the Center for Scientific Computation \& Math Modeling (CSCAMM) in
the University of Maryland (Y. Cai).}}
\author{Weizhu Bao\thanks{Department of Mathematics and Center for Computational
Science and Engineering, National University of Singapore, Singapore
119076 ({\tt bao@math.nus.edu.sg}, URL:
http://www.math.nus.edu.sg/\~{}bao/)} \and Yongyong
Cai\thanks{Beijing Computational Science Research Center, Beijing 100084, P. R. China; and 
Department of Mathematics, National University of
Singapore, Singapore 119076 ({\tt chainrules@gmail.com})}. Current address: Center of Scientific Computation
and Mathematical Modeling (CSCAMM), University of Maryland, College Park, MD 20742, USA}

\date{}
\begin{document}

\maketitle

% ==================== Abstract =======================

\begin{abstract}We propose an exponential wave integrator
sine pseudospectral (EWI-SP) method  for the nonlinear Schr\"{o}dinger
equation (NLS) with wave operator (NLSW), and carry out rigorous error analysis.
The NLSW is  NLS perturbed by the wave operator with strength described by a
 dimensionless parameter $\vep\in(0,1]$. As $\vep\to0^+$, the NLSW converges to
 the NLS and for the small perturbation, i.e. $0<\vep\ll1$, the solution of the NLSW  differs
from that of the NLS with  a function oscillating in time  with  $O(\vep^2)$-wavelength at
$O(\vep^2)$ and $O(\vep^4)$ amplitudes for ill-prepared and well-prepared initial data,
respectively. This rapid oscillation  in time brings significant difficulties in designing
and analyzing numerical methods with  error bounds uniformly in $\vep$.   In this
work, we show that the proposed EWI-SP possesses the optimal uniform error bounds at
 $O(\tau^2)$  and $O(\tau)$ in $\tau$ (time step) for well-prepared initial data and
 ill-prepared initial data, respectively, and spectral accuracy in $h$ (mesh size) for
 the both cases, in the $L^2$ and semi-$H^1$ norms. This result significantly improves the
 error bounds of the  finite difference methods  for the NLSW. Our approach
 involves a careful study of the error propagation, cut-off of the nonlinearity
 and the energy method. Numerical examples are provided to confirm our theoretical
 analysis.
\end{abstract}

\bigskip
\begin{keywords} nonlinear Schr\"{o}dinger equation with wave operator,
 error estimates, exponential wave integrator, sine pseudospectral method
\end{keywords}
\bigskip

%\noindent {\textbf{AMS Subject Classifications}.}

\begin{AMS} 35Q55,  65M12, 65M15, 65M70, 81-08
\end{AMS}

\pagestyle{myheadings}
\thispagestyle{plain}
\markboth{W. Bao and Y. Cai} {Optimal uniform error estimates of EWI-SP for NLSW}

% ==================== Section 1 =======================
\section{Introduction}
\label{s1} \setcounter{equation}{0}

In this paper, we consider the nonlinear Schr\"{o}dinger equation with
wave operator (NLSW)   in $d$ ($d=1,2,3$) dimensions   as \cite{Baoc,Mach,Colin1,Colin2,Schoene,Tsu,
BDX,Xin}
\begin{equation}\label{eq:nls_wave_full}
\begin{cases}
i\p_t \psi(\bx,t)-\vep^2 \p_{tt}\psi(\bx,t)+\nabla^2
\psi(\bx,t)+f(|\psi(\bx,t)|^2)\psi(\bx,t)=0,& \bx\in\Bbb R^d,\,\,t>0,\\
\psi(\bx,0)=\psi_0(\bx),\quad\quad
\p_t\psi(\bx,0)=\psi_1^\vep(\bx), & \bx\in \Bbb R^d,
\end{cases}
\end{equation}
where $\psi:=\psi(\bx,t)$ is a complex function,  $\bx$ is
the spatial variable, $t$ is the time,  $0<\vep\leq1$ is a
dimensionless parameter, $f :[0,+\infty)\to \Bbb R$ is a
real-valued function, and $\nabla^2=\Delta$ is  the Laplace
operator in $d$ dimensional space. In this paper, we will consider $\psi_1^\vep$ to be $O(1)$
w.r.t $\vep$.

The NLSW has different physical applications, including
the nonrelativistic limit of the Klein-Gordon equation \cite{Mach,Schoene,Tsu},
the Langmuir wave envelope approximation in plasma\cite{Colin1,Colin2},
and the modulated planar pulse approximation of
the sine-Gordon equation for light bullets \cite{BDX,Xin}.

As proven  by \cite{Mach,Colin1,Schoene,Tsu}, when $\vep\to0^+$, the NLSW (\ref{eq:nls_wave_full})  converges to the standard
nonlinear Schr\"{o}dinger equation (NLS),
\begin{equation}\label{eq:nls_full}
\begin{cases}
i\p_t \psi(\bx,t)+\nabla^2 \psi(\bx,t)+f(|\psi(\bx,t)|^2)\psi(\bx,t)=0,&
 \bx\in \Bbb R^d,\,\,t>0,\\
\psi(\bx,0)=\psi_0(\bx),\quad\quad & \bx\in \Bbb R^d.
\end{cases}
\end{equation}

Due to the wave operator, the solution of the NLSW (\ref{eq:nls_wave_full}) differs from the solution of the NLS
(\ref{eq:nls_full}) with a function oscillating in time $t$ with $O(\vep^2)$ wavelength. Indeed, to measure the difference between
the NLSW (\ref{eq:nls_wave_full}) and the NLS (\ref{eq:nls_full}), we can  write  the initial
 velocity $\psi_1^\vep$ for the NLSW (\ref{eq:nls_wave_full}) as
\be\label{eq:ini_full}
\psi_1^\vep(\bx)=i\left(\nabla^2\psi_0(\bx)+
f(|\psi_0(\bx)|^2)\psi_0(\bx)\right)+\vep^\alpha \omega^\vep(\bx),\quad \alpha\ge0,
\ee
where $i\left(\nabla^2\psi_0(\bx)+
f(|\psi_0(\bx)|^2)\psi_0(\bx)\right)$ corresponds to the initial velocity for the NLS (\ref{eq:nls_full}). Then
we have the following asymptotic expansion
for the solution $\psi(\bx,t)$ of the NLSW (\ref{eq:nls_wave_full}) as $\vep\to0^+$
(cf. \cite{Baoc})
\bea\label{eq:asy_nlsw}
\psi(\bx,t)&=&\psi^s(\bx,t)+
\vep^2\{\text{terms  without oscillation}\}\\
&&+\vep^{2+\min\{\alpha,2\}}\Psi(\bx,t/\vep^2)+
{\text{higher order terms  with oscillation}}, \nonumber
\eea
where $\psi^s:=\psi^s(\bx,t)$ satisfies the NLS (\ref{eq:nls_full}).

Based on this asymptotic expansion, we can make
assumptions (A) and (B) (cf. section 2.2) on the NLSW.
Furthermore, from (\ref{eq:asy_nlsw}), we identify
two cases for the imposed initial data (\ref{eq:ini_full}),
i.e., the well-prepared initial data when
 $\alpha\ge2$ and the ill-prepared initial data when
$0\leq\alpha<2$.

Various numerical methods have been developed in the literatures
for  the NLS, including the time-splitting pseudospectral method
\cite{Hardin,Taha1,Robinson,Bao2,Baocc,Besse,Lubich,Neu}, finite difference method
\cite{Akrivis1,Bao0,Baocc,Chang1,Glassey}, etc. Meanwhile, there are few  numerical methods
 for the NLSW and conservative finite difference methods \cite{Wang,Guo,Colin2}
 are the most popular ones. In our recent work \cite{Baoc},
 both the conservative Crank-Nicolson finite difference scheme
 and the semi-implicit finite difference scheme  have been analyzed
 for  NLSW. We have proved  the uniform $l^2$ and semi-$H^1$ error
 estimates of these two schemes w.r.t. $\vep\in(0,1]$,  at the
 order of $O(h^2+\tau)$ for well-prepared initial data and $O(h^2+\tau^{2/3})$
 for ill-prepared initial data,  in all dimensions ($d=1,2,3$), with mesh
 size $h$ and time step $\tau$. However, the uniform convergence rates
 are not optimal in $\tau$.  Here, we propose an exponential wave
 integrator sine pseudospectral (EWI-SP) method
 with uniform spectral accuracy in space, and the method has the optimal
 uniform error bounds   in time at the order $O(\tau^2)$  for well-prepared
 initial data and $O(\tau)$ for ill-prepared initial data. EWI-SP greatly
 improves the uniform convergence rate compared to the finite difference methods,
 and the uniform error bounds are optimal for the well-prepared initial data case.
 In fact, the exponential wave integrator  has been widely
 used in solving highly oscillatory PDE \cite{Bao1} and
 ODE problems \cite{Gau,Hoch}, and the main advantage of EWI-SP is that only
 second order derivatives in time are involved, which is the key point in
 our analysis to get the optimal and uniform error bounds.
 Other techniques of the analysis include the cut-off of the nonlinearity,
 energy method, and recursion properties of the scheme.

This paper is organized as follows. In section 2, we introduce
EWI-SP method and our main results. Section 3 is devoted to the
error estimates of EWI-SP in the case of well-prepared initial data.
In section 4, ill-prepared initial data case is considered. Numerical
results are reported in section 5 to confirm the error estimates. Then
some conclusions are drawn in section 6. Throughout the paper,  $C$ will
denote a generic constant independent of $\vep$, mesh size $h$ and time
step $\tau$, and we use the notation $A\lesssim B$ to mean
that there exists a generic constant $C$ which is independent of $\vep$,
time step $\tau$ and mesh size $h$ such that $|A|\le C\,B$.

\section{Exponential-wave-integrator sine pseudospectral method and results}\label{s2}\setcounter{equation}{0}
In practical computation, the NLSW (\ref{eq:nls_wave_full})
is usually  truncated on  a bounded  interval $\Omega=(a,b)$
in 1D ($d=1$), or a bounded rectangle $\Omega=(a,b)\times(c,d)$
in 2D ($d=2$) or a bounded box $\Omega=(a,b)\times(c,d)\times(e,f)$ in
3D ($d=3$),   with zero Dirichlet boundary condition. This truncation is accurate as long as the solution
of (\ref{eq:nls_wave_full}) stays localized. For the simplicity
of notation, we only deal with the case in 1D, i.e. $d=1$ and $\Omega=(a, b)$.
Extensions to 2D and 3D are straightforward, and the error estimates
in $L^2$-norm and semi-$H^1$ norm are the same in 2D and 3D (cf. Remark \ref{rmk:ext}). In 1D, NLSW
(\ref{eq:nls_wave_full}) is truncated on the interval $\Omega=(a,b)$ as
\begin{equation}\label{eq:nls_wave}
\begin{cases}
i\p_t \psi(x,t)-\vep^2 \p_{tt}\psi(x,t)+\p_{xx} \psi(x,t)+f(|\psi(x,t)|^2)
\psi(x,t)=0,& x\in\Omega\subset \Bbb R,\,\,t>0,\\
\psi(x,0)=\psi_0(x),\quad\quad
\p_t\psi(x,0)=\psi_1^\vep(x), & x\in \Omega,\\
\psi(x,t)\big|_{\partial\Omega}=0, &t>0.
\end{cases}
\end{equation}
As $\vep\to 0^+$, the  solution of equation (\ref{eq:nls_wave})
will  converge to the solution of the corresponding  NLS
\cite{Colin1,Schoene,Tsu,Baoc}
\begin{equation}\label{eq:nls}
\begin{cases}
i\p_t \psi^s(x,t)+\p_{xx} \psi^s(x,t)+f(|\psi^s(x,t)|^2)\psi^s(x,t)=0,&
x\in\Omega\subset \Bbb R,\,\,t>0,\\
\psi^s(x,0)=\psi_0(x),& x\in\Omega,\\
 \psi^s(x,t)\big|_{\p\Omega}=0,& t>0.
\end{cases}
\end{equation}

In the spirit of (\ref{eq:ini_full}), we assume the initial data satisfy the condition
\be\label{eq:ini}
\psi_1^\vep(x)=\psi_1(x)+\vep^\alpha \omega^\vep(x),\quad x\in \Omega,
\ee
where $\psi_1(x)=i\left(\p_{xx} \psi_0(x)+f(|\psi_0(x)|^2)\psi_0(x)\right)$
is the value of $\p_t \psi^s(x,t)|_{t=0}$ with $\psi^s(x,t)$
being the solution of the limiting NLS (\ref{eq:nls}), $\omega^\vep$ is
uniformly bounded in $H_0^1\cap H^2$ (w.r.t. $\vep$) and satisfies
$\liminf_{\vep\to0^+}\|\omega^\vep\|_{H^2}>0$ and $\alpha\ge0$ is
a parameter describing the compatibility of the initial data with
respect to the limiting NLS (\ref{eq:nls}).

\subsection{EWI-SP method}\label{subsec:ewi}
For simplicity of notations, we define  $\alpha^\ast$ as
 \be\label{ineq}
 \alpha^\ast=\min\{\alpha,2\}.
 \ee
 We also denote the nonlinear term $f(|z|^2)z$ as
 \be
 \Tf(z)=f(|z|^2)z,\quad \forall z\in\Bbb C.
 \ee
Choose time step
$\tau:=\Delta t$ and denote time steps as $t_n:=n\,\tau$ for
$n=0,1,2,\ldots$; choose mesh size $\Delta x:=\frac{b-a}{M}$
with $M$  being a positive integer and
denote $h:=\Delta x$ and  grid points
as
\[x_j:=a+j\,\Delta x,\quad j=0,1,\ldots, M.\]
Define the index sets
 \begin{eqnarray*}
 &&{\cal T}_{M}=\{j\ |\ j=1,2,\ldots,M-1\},
 \qquad{\cal T}_{M}^0=\{j\ |\ j=0,1,2\ldots,M\}.
 \end{eqnarray*}
and denote
\be\label{X_M}\begin{split}
&X_M=\text{span}\left\{\Phi_l(x)=\sin\left(\mu_l(x-a)\right),
\quad \mu_l=\frac{\pi l}{b-a},\quad x\in\Omega,\,l\in{\cal T}_M\right\},\\
&Y_M=\left\{v=(v_0,v_1,\ldots,v_{M})^T\in \Bbb C^{M+1}\bigg|\,v_0=v_{M}=0\right\}.
\end{split}
\ee
We define the finite difference operators as usual, for
$\psi(x_j,t_n)$ ($j\in\Tt_M^0$, $n\ge1$),
\begin{equation}
\delta_x^+\psi(x_j,t_n)=\frac{1}{h}(\psi(x_{j+1},t_n)-
\psi(x_j,t_n)),\quad
\delta_t^-\psi(x_j,t_n)=\frac{1}{\tau}(\psi(x_j,t_{n})
-\psi(x_j,t_{n-1})).
\end{equation}

 The {\sl  exponential wave integrator  sine spectral method}
 of NLSW (\ref{eq:nls_wave}) is to apply sine spectral
 method for spatial discretization and exponential wave integrator
 for time. For any function $\psi(x)\in L^2(\Omega)$, $\phi(x)\in C_0(\bar{\Omega})$,
 and vector $\phi=(\phi_0,\phi_1,\ldots,\phi_{M})^T\in Y_M$, let
 $P_M:L^2\left(\Omega\right)\to X_M$ be the standard $L^2$ projection
 onto $X_M$ and $I_M:C_0(\overline{\Omega})\to X_M$  and $I_M: Y_M\to X_M$
 be the standard sine interpolation operator as
\be\label{interpolation}
\left(P_M\psi\right)(x)=\sum\limits_{l=1}^{M-1}\hat{\psi}_l
\sin\left(\mu_l(x-a)\right),\qquad \left(I_M\phi\right)(x)
=\sum\limits_{l=1}^{M-1}\tilde{\phi}_l
\sin\left(\mu_l(x-a)\right),\quad x\in\bar{\Omega}=[a,b],
\ee
and the coefficients are given by
\be\label{eq:integ}
\hat{\psi}_l=\frac{2}{b-a}\int_a^b\psi(x)\sin\left(\mu_l(x-a)\right)\,dx,\qquad \tilde{\phi}_l=\frac{2}{M}\sum\limits_{j=1}^{M-1}\phi_j\sin\left(jl\pi/M\right),\quad l\in{\cal T}_M,
\ee
where $\phi_j=\phi(x_j)$ when $\phi$ is a function instead of a vector.

The sine spectral discretization for equation (\ref{eq:nls_wave}) becomes:

\noindent Find
\be\label{eq:sinsp}
\psi_M:=\psi_M(x,t)=\sum\limits_{l=1}^{M-1}
\widehat{\psi}_l(t)\sin(\mu_l(x-a)),\quad x\in \Omega,\, t\ge0,
\ee
such that
\be\label{wif}
(i\p_t-\vep^2\p_{tt})\psi_{M}(x,t)+\Delta\psi_M(x,t)+
P_M\left(f(|\psi_M|^2)\psi_M\right)=0,\quad x\in\Omega,\,t>0.
 \ee

 Substituting (\ref{eq:sinsp}) into (\ref{wif}) and making use
 of the fact that $\Phi_l(x)=\sin(\mu_l(x-a))$ (\ref{X_M}) are orthogonal to each other,
 we have
 \be\label{eq:psim1}
 -\vep^2\frac{d^2}{dt^2}\widehat{\psi}_l(t)+i\frac{d}{dt}\widehat{\psi}_l(t)
 -|\mu_l|^2\widehat{\psi}_l(t)+\left(\widehat{\Tf(\psi_M)}\right)_l
 =0,\quad l\in{\cal T}_M,\, t>0.
 \ee
Then, a numerical method can be designed by properly treating the above second order ODEs (\ref{eq:psim1}) \cite{Bao1,Gau,Hoch}, e.g.
solving (\ref{eq:psim1}) via variation of constant formula and approximating the convolution term in a good manner.
Now, let us state our approach in detail.
For each $l\in{\cal T}_M$, around time $t_n=n\tau$ ($n\ge0$),
we reformulate the above ODE as
\be\label{ODE}
-\vep^2\frac{d^2}{dt^2}\widehat{\psi}_l(t_n+s)+i\frac{d}{dt}\widehat{\psi}_l(t_n+s)
 -|\mu_l|^2\widehat{\psi}_l(t_n+s)+f_l^n(s)=0,\quad s\in\Bbb R,
\ee
where
\be\label{eq:fl}
f_l^n(s)=\left(\widehat{\Tf(\psi_M)}\right)_l(t_n+s).
\ee
For $l\in{\cal T}_M$ and $s\in\Bbb R$, we denote
\be\label{keldef}
\begin{split}
&\beta_l^{+}=\frac{1+\sqrt{1+4\vep^2|\mu_l|^2}}{2\vep^2}=O\left(\frac{1}{\vep^2}\right),\qquad \beta_{l}^{-}=\frac{1-\sqrt{1+4\vep^2|\mu_l|^2}}{2\vep^2}=
\frac{-2|\mu_l|^2}{1+\sqrt{1+4\vep^2|\mu_l|^2}}O(1),\\
&\bd_l=\beta_l^+-\beta_l^-=\frac{\sqrt{1+4\vep^2|\mu_l|^2}}{\vep^2}=O\left(\frac{1}{\vep^2}\right), \qquad
\kappa_l(s)=e^{is\beta_l^+}-e^{is\beta_l^-}=O(1).
\end{split}
\ee
Solving the above second order ODE (\ref{ODE}), e.g., using the
variation-of-constant formula, the solution can be
written as follows, for any $s\in\Bbb R$,
\be\label{eq:sol}
\widehat{\psi}_l(t_n+s)=\gamma_l^ne^{i\beta_{l}^{+}s}+\nu_l^ne^{i\beta_{l}^{-}s}
-\frac{i}{\vep^2\bd_l}
\int_0^sf_l^n(s_1)\kappa_l(s-s_1)\,ds_1,
\ee
with
\be\label{eq:sol2}
\gamma_l^n=-\frac{\beta_{l}^{-}\widehat{\psi}_l(t_n)+
i\p_t\widehat{\psi}_l(t_n)}{\bd_l},
\quad \nu_l^n=\frac{\beta_{l}^{+}\widehat{\psi}_l(t_n)+
i\p_t\widehat{\psi}_l(t_n)}{\bd_l}.
\ee
Using the formula (\ref{eq:sol}) and (\ref{eq:sol2}),
we are going to determine the suitable approximations of
$\widehat{\psi}_l$ at different time steps. For $n=0$,
let $s=\tau$ in (\ref{eq:sol}), then we get
\be\label{eq:n=1}
\widehat{\psi}_l(\tau)=\gamma_l^0e^{i\beta_{l}^{+}\tau}
+\nu_l^0e^{i\beta_{l}^{-}\tau}
-\frac{i}{\vep^2\bd_l}
\int_0^\tau f_l^0(s)\kappa_l(\tau-s)\,ds,
\ee

For $n\ge1$, choosing $s=\pm \tau$ in (\ref{eq:sol}),
and eliminating the $\p_t\widehat{\psi}_l(t_n)$ term,
we have
\begin{equation}
\begin{split}\label{eq:n>1}
\widehat{\psi}_l(t_{n+1})=&-e^{\frac{i\tau}{\vep^2}}
\widehat{\psi}_l(t_{n-1})+2\,e^{\frac{i\tau}{2\vep^2}}\cos(\tau\bd_l/2)
\widehat{\psi}_l(t_n)-\frac{i}{\vep^2\bd_l}
\int_0^\tau f_l^n(s)\kappa_l(\tau-s)\,ds\\&
+i\frac{e^{i\tau/\vep^2}}{\vep^2\bd_l}
\int_0^\tau f_l^{n-1}(s)\kappa_l(-s)\,ds.
\end{split}
\end{equation}
 For convenience, we recall the definition of `sinc' function as
\be
\sinc(s)=\frac{\sin(s)}{s},\quad \text{for}\quad s\neq 0, \,
\text{and}\quad \sinc(0)=1,
\ee
and introduce the following notations for $l\in{\cal T}_M$,
\begin{equation}
\begin{split}
&\sigma_l^+(s)=e^{is\beta_l^+/2}\sinc(s\beta_l^+/2),
\quad\sigma_l^-(s)=e^{is\beta_l^-/2}\sinc(s\beta_l^-/2),\quad s\in\Bbb R.
\end{split}
\end{equation}
We approximate the  integrals in (\ref{eq:n=1}) and (\ref{eq:n>1}) as follows:
\begin{equation}\label{eq:approxn=1}
\begin{split}
\int_0^\tau f_l^0(s)\kappa_l(\tau-s)\,ds\approx &\,\int_0^\tau \left(f_l^0(0)+s\,\p_tf_l^0(0)\right)\kappa(\tau-s)ds\\
=&\tau f_l^0(0)
\left(\sigma_l^+(\tau)-\sigma_l^-(\tau)\right)
+i\tau\p_tf_l^0(0)
\left(\frac{1-\sigma_l^+(\tau)}{\beta^+_l}
-\frac{1-\sigma_l^-(\tau)}{\beta^-_l}\right),\\
\int_0^\tau f_l^0(s)\kappa_l(-s)\,ds\approx&\,\tau f_l^0(0)
\left(\overline{\sigma_l^+(\tau)}-\overline{\sigma_l^-(\tau)}\right)
+i\tau\p_tf_l^0(0)
\left(\frac{1-\sigma_l^+(\tau)}{\beta^+_le^{i\tau\beta_l^+}}
-\frac{1-\sigma_l^-(\tau)}{\beta^-_le^{i\tau\beta_l^-}}\right),
\end{split}
\end{equation}
where $\bar{c}$ denotes the complex conjugate of $c$ and $\p_tf_l^0(0)$ can be computed exactly since
$\p_t\psi(t=0)$ is known using the initial data.
For $n\ge1$, we use the similar approximation as above
and approximate $\p_tf_l^n(0)$ by finite difference $\delta_t^-f_l^n(0)$,
\begin{align*}
\int_0^\tau f_l^n(s)\kappa_l(\tau-s)\,ds\approx & \tau f_l^n(0)\left(\sigma_l^+(\tau)-\sigma_l^-(\tau)\right)
+i\tau\delta_t^-f_l^n(0)\left(\frac{1-\sigma_l^+(\tau)}{\beta^+_l}
-\frac{1-\sigma_l^-(\tau)}{\beta^-_l}\right),\\
\int_0^\tau f_l^n(s)\kappa_l(-s)\,ds
\approx&\tau f_l^n(0)\left(\overline{\sigma_l^+(\tau)}-
\overline{\sigma_l^-(\tau)}\right)
+i\tau\delta_t^-f_l^n(0)\left(\frac{1-\sigma_l^+(\tau)}{\beta^+_le^{i\tau\beta_l^+}}
-\frac{1-\sigma_l^-(\tau)}{\beta^-_le^{i\tau\beta_l^-}}\right).
\end{align*}
For convenience of notations, we  denote $\Td(\phi(x,t_n))$ for $\phi(x,t)$ as
\begin{equation}\label{Tddef}
\Td(\phi(0))=\frac{d}{dt}\Tf(\phi(t))|_{t=0},
\quad \Td(\phi(t_n))=\delta_t^-\Tf(\phi(t_n)),\quad n\ge1.
\end{equation}

Thus,  sine spectral method for solving (\ref{eq:nls_wave})
can be derived as follows. For $n\ge0$, let $\psi_M^{n}(x)$
be the  approximations of $\psi_M(x,t_n)$.
Choose $\psi_M^0=P_M(\psi_0)$, then $\psi_M^{n+1}(x)$
for $n\ge0$ is given by
\begin{equation}\label{scheme1:1}
\psi_M^{n+1}(x)=\sum\limits_{l=1}^{M-1}
(\widehat{\psi_M^{n+1}})_l\sin(\mu_l(x-a)),\quad x\in\Omega,\quad n=0,1,\ldots,
\end{equation}
where the sine transform coefficients can be obtained
via the following formula
\begin{equation}\label{scheme1:2}
\begin{split}
(\widehat{\psi^{1}_M})_l=&c_l^0(\widehat{\psi_M^{0}})_l+
d_l^0(\widehat{\psi_1^\vep})_l
+p_l(\widehat{\Tf(\psi_M^0)})_l
+q_l(\widehat{\Td(\psi^0_M)})_l,\quad\text{and for}\;  n\ge1\; \text{as}\\
(\widehat{\psi^{n+1}_M})_l=&c_l(\widehat{\psi_M^{n-1}})_l
+d_l(\widehat{\psi_M^{n}})_l+p_l(\widehat{\Tf(\psi_M^n)})_l
+q_l(\widehat{\Td(\psi^n_M)})_l
-p_l^{\ast}(\widehat{\Tf(\psi_M^{n-1})})_l
-q_l^{\ast}(\widehat{\Td(\psi^{n-1}_M)})_l,
\end{split}\end{equation}
where $\Td(\psi_M^{n})$ ($n\ge0$) is given by (\ref{Tddef}) as
\begin{equation}\label{scheme1:3}
\begin{split}
\Td(\psi_M^0)=G(\psi_M^0)P_M(\psi_1^\vep)+
H(\psi^0_M)\overline{P_M(\psi_1^\vep)}, \quad \Td(\psi_M^n)=
\delta_t^-\Tf(\psi_M^n),\quad n\ge1,
\end{split}
\end{equation}
with
\begin{equation}\label{coef1}
\begin{split}
&G(z)=f(|z|^2)+f^\prime(|z|^2)\cdot|z|^2,\quad H(z)=
f^\prime(|z|^2)z^2,\qquad\forall z\in\Bbb C,\\
&c_l^0=\frac{\beta_l^+e^{i\tau\beta_l^-}-\beta_l^-
e^{i\tau\beta_l^+}}{\bd_l},\quad
d_l^0=-i\tau e^{i\frac{\tau}{2\vep^2}}
\sinc(\tau\frac{\bd_l}{2}),\quad p_l=\frac{-i\tau}{\vep^2\bd_l}
(\sigma_l^+(\tau)-\sigma_l^-(\tau)),\\
&q_l=\frac{\tau}{\vep^2\bd_l}\left(
\frac{1-\sigma_l^+(\tau)}{\beta^+_l}
-\frac{1-\sigma_l^-(\tau)}{\beta^-_l}\right),\quad
p_l^{\ast}=\frac{-i\tau e^{\frac{i\tau}{\vep^2}}}{\vep^2\bd_l}(\overline{\sigma_l^+(\tau)}
-\overline{\sigma_l^-(\tau)}),\\
&
q_l^{\ast}=\frac{\tau e^{\frac{i\tau}{\vep^2}}}{\vep^2\bd_l}\left(\frac{1-\sigma_l^+(\tau)}{e^{i\tau\beta_l^+}\beta^+_l}
-\frac{1-\sigma_l^-(\tau)}{e^{i\tau\beta_l^-}\beta^-_l}\right),
\quad c_l=-e^{\frac{i\tau}{\vep^2}},\quad
d_l=2e^{i\frac{\tau}{2\vep^2}}\cos(\tau\frac{\bd_l}{2}).
\end{split}
\end{equation}
We note that $|c_l|,|d_l|,|c_l^0|\lesssim 1$, $|d_l^0|,|q_l|, |q_l^{\ast}|\lesssim \tau^2$
and $|p_l|,|p_l^{\ast}| \lesssim \tau$ ($l\in\Tt_M$).

In practice, the above approach is not suitable due to
the difficulty of computing the
sine transform coefficients in (\ref{scheme1:2}) through
the integrals given in (\ref{eq:integ}). Here, we present an efficient
implementation by choosing $\psi_M^0(x)$ as the interpolation
of $\psi_0(x)$ on the grid points $\{x_j,\,j\in\Tt_M\}$, i.e.,
$\psi_M(x,0)=I_M(\psi_0)(x)$, and
approximating the integrals in (\ref{eq:integ}) and
(\ref{scheme1:2}) by a quadrature rule on the grid points.

Now, we state the {\it exponential wave integrator
sine pseudospectral} (EWI-SP) method. Let $\psi_j^n$ ($n\ge1$)
be the  approximation of $\psi(x_j,t_n)$ with $\psi_0^{n}=\psi_{M}^n=0$,
and we denote $\psi_j^0=\psi_0(x_j)$, $\omega_j^\vep=\omega^\vep(x_j)$
($j\in\Tt_M$),  $\psi^n=(\psi_0^n,\psi_1^n,\ldots,\psi_{M}^n)^T\in Y_M$.
Thus,  the numerical approximation $\psi^{n+1}\in Y_M$ at time $t_{n+1}$
($n=0,1,\ldots,$) can be computed as
\begin{equation}\label{scheme2:1}
\psi^{n+1}_j=\sum\limits_{l=1}^{M-1}
(\widetilde{\psi^{n+1}})_l\sin(jl\pi/M),\quad j\in\Tt_M,\quad n=0,1,\ldots,
\end{equation}
where the  coefficients are obtained through the following formula,
\begin{equation}\label{scheme2:2}
\begin{split}
(\widetilde{\psi^{1}})_l=&c_l^0(\widetilde{\psi_0})_l
+ d_l^0(\widetilde{\psi_1^\vep})_l
+p_l(\widetilde{\Tf(\psi_0)})_l
+q_l(\widetilde{\Td(\psi_0)})_l,\quad\text{and for} \; n\ge1 \;\text{as}\\
(\widetilde{\psi^{n+1}})_l=&c_l^1(\widetilde{\psi^{n-1}})_l+ d_l^1(\widetilde{\psi^{n}})_l
+p_l(\widetilde{\Tf(\psi^n)})_l
+q_l(\widetilde{\Td(\psi^n)})_l
-p_l^{\ast}(\widetilde{\Tf(\psi^{n-1})})_l-q_l^{\ast}
(\widetilde{\Td(\psi^{n-1})})_l,
\end{split}\end{equation}
and $\Td(\psi_0)$ is  defined by (\ref{Tddef})   as
\begin{equation}\label{scheme2:3}
\begin{split}
\Td(\psi_0)=G(\psi_0)\psi_1^\vep+
H(\psi_0)\overline{\psi_1^\vep}, \quad \Td(\psi^n)=
\delta_t^-\Tf(\psi^n),\quad n\ge1,
\end{split}
\end{equation}
with  $c_l^0$, $d_l^0$, $c_l$, $d_l$, $p_l$, $q_l$,  $p_l^{\ast}$
and $q_l^{\ast}$ given in (\ref{coef1}).
In computation,
sometimes $\psi_1$ (\ref{eq:ini}) is not explicitly known, we can
replace $(\widetilde{\psi_1^\vep})_l$ ($l\in\Tt_M$) in (\ref{scheme2:2})
by the approximation below, and the main results remain the same,
\be\label{scheme2:4}
(\widetilde{\psi_1^\vep})_l\approx-i(\mu_l)^2
(\widetilde{\psi_0})_l+i(\widetilde{\Tf(\psi_0)})_l+\vep^\alpha
(\widetilde{\omega^\vep})_l.
\ee
The EWI-SP (\ref{scheme2:1})-(\ref{scheme2:3}) is explicit,
and can be solved
efficiently by fast sine transform. The memory cost is $O(M)$ and
the computational cost per time step is $O(M\log M)$. Actually, the scheme
has a very good recursion
property in phase space (cf. (\ref{eq:induc1}) and
Lemma \ref{lem:localerr}). The observation is the following:
from the equation (\ref{eq:nls_wave}), we can see that there are
two characteristics corresponding to
the operators $U_{+}(t)=e^{i\frac{1+\sqrt{1+4\vep^2(-\nabla^2)}}{2\vep^2}}$
and $U_{-}(t)=e^{it\frac{2\nabla^2}{1+\sqrt{1+4\vep^2(-\nabla^2)}}}$.
As $\vep\to 0^+$, the first characteristic
component $U_+$ will vanish while oscillating in time, and the second
characteristic component $U_-$ will converge to the Schr\"odinger
operator. That is to say, the solution behaves differently
along the two characteristics.
In our scheme EWI-SP, these characteristics are
treated separately through the approximations of
the integrals in (\ref{eq:n=1}) and (\ref{eq:n>1}),
unlike  those wave-type equations (without $i\p_t$ term)
\cite{Bao1} and second order ODEs \cite{Gau,Hoch}.

The pseudospectral method EWI-SP (\ref{scheme2:1})-(\ref{scheme2:2}) is a full
discretization and the spectral method (\ref{scheme1:1})-(\ref{scheme1:2})
is a semi-discretization. For simplicity, we will  prove
the error estimates for the full discretization and omit the analysis for the
semi-discretization  which can be done in the same spirit.

\begin{remark}If we consider the NLSW (\ref{eq:nls_wave}) with
periodic boundary condition or homogenous Neumann   boundary condition,
similar Fourier pseudospectral method or cosine pseudospectral method
can be easily designed as above, and the main results in this paper
remain valid in both cases.
\end{remark}

\subsection{Main results} In order to state our main results, we
introduce the convenient Sobolev spaces. Let
$\phi(x)\in H^m(\Omega)\cap H_0^1(\Omega)$
be represented in sine series as
\be
\phi(x)=\sum\limits_{l=1}^{+\infty}\widehat{\phi}_l
\Phi_l(x),\qquad x\in\Omega=(a,b),
\ee
with $\widehat{\phi}_l$ and $\Phi_l(x)$ given in (\ref{eq:integ}) and (\ref{X_M}),
respectively. Define the subspace of $H^m\cap H_0^1$ as  $H_s^m(\Omega)=\{\phi\in H^m(\Omega)| \p_x^{2k}
\phi(a)=\p_x^{2k}\phi(b)=0,\; 0\leq 2k<m\}$ (the boundary values are understood in the trace sense)
 equipped
with the norm
\be
\|\phi\|_{H_s^m(\Omega)}=\left(\sum\limits_{l=1}^\infty
\mu_l^{2m}|\widehat{\phi}_l|^2\right)^{\frac12},
\ee
which is equivalent to the $H^m$ norm in this subspace.
 In the remaining
part of this paper, we will omit $\Omega$ when the norm is only
taken with respect to the spatial variables.

For $u=(u_0,u_1,\ldots,u_{M})^T\in Y_M$, we define the
discrete $l^2$, semi-$H^1$ and $l^\infty$ norms as
\be\label{norm}
\|u\|_{l^2}^2=h\sum\limits_{j=1}^{M-1}|u_j|^2,\quad \|\delta_x^+u\|_{l^2}^2=h\sum\limits_{j=0}^{M-1}|\delta_x^+u_j|^2,
\quad \|u\|_{l^\infty}=\max\limits_{
j\in\Tt_M}|u_j|.
\ee
%and we may write them as $\|u_j\|_{l^2}$, $\|\delta_x^+u_j\|_{l^2}$ and
%$\|u_j\|_{l^\infty}$ when there is no confusion.

According to the known results in \cite{Colin1,Mach,Schoene,Tsu}
and the asymptotic expansion in section 1,  we make the following
assumptions on NLSW (\ref{eq:nls_wave}), i.e.
assumptions on the initial data (\ref{eq:ini}) for (\ref{eq:nls_wave}),
\begin{equation*}
 \text{(A)}\qquad1\lesssim
 \|\omega^\vep(\cdot)\|_{H^{m_0}_{s}}
 \lesssim 1,\quad \psi_0\in H^{m_0+2}_{s}
 \qquad \text{for some}\quad m_0\ge2;
 \qquad\qquad\qquad\quad
   \end{equation*}
and assumptions on the solution $\psi(t):=\psi(\cdot,t)$ of the NLSW
(\ref{eq:nls_wave})
and the solution $\psi^s(\cdot,t)$ of the NLS (\ref{eq:nls}),  i.e.,
let $0<T<T_{\rm max}$ with
$T_{\rm max}$ being the maximal common existing time,
\beas \text{(B) }\;\;\quad
&&\|\psi\|_{L^\infty([0,T];L^\infty\cap H^{m_0}\cap H_0^1)}+
\|\p_t\psi\|_{L^\infty([0,T];H^1)}
+\|\psi^s\|_{L^\infty([0,T];L^\infty\cap H^{m_0}\cap H_0^1)}
\lesssim 1,\quad m_0\ge2,\\
\text{and }&&\|\partial_{tt}\psi\|_{L^\infty([0,T];H^1)}
\lesssim \frac{1}{\vep^{2-\alpha^\ast}}, \quad
\|\partial_{t}^k\psi^s\|_{L^\infty([0,T];H^1)}\lesssim 1, \quad \|\psi-\psi^s\|_{L^\infty([0,T];H^1)}\lesssim\vep^2,\quad k=1,2.
\eeas
Under assumption (A), we can verify that
$\psi_1,\psi_1^\vep\in H_s^{m_0}$ (\ref{eq:ini}) for
function $f(\cdot)$ smooth enough. Under assumption (B),
in view of the equations (\ref{eq:nls_wave}) and (\ref{eq:nls}),
and Lemma \ref{lem:reg}, it is easy to show that
$\psi,\psi^s\in L^\infty([0,T];H^{m_0}_s)$.

Denote the constant $M_1$ as
\be\label{M1}
M_1=\max\left\{\sup_{\vep\in(0,1]}\|\psi(x,t)\|_{L^\infty([0,T]\times\Omega)},\,
\|\psi^s(x,t)\|_{L^\infty([0,T]\times\Omega)}\right\}.
\ee
We can prove the following error estimates for  EWI-SP (\ref{scheme2:1})-(\ref{scheme2:3}).

\begin{theorem}\label{thm:main}(Well-prepared initial data)
Let $\psi^n\in Y_M$ and $\psi^n_I(x)=I_M(\psi^n)$ ($n\ge0$)
be the numerical approximations obtained from EWI-SP (\ref{scheme2:1})-(\ref{scheme2:3}).
Assume $f(s)\in C^{k}([0,+\infty))$ ($k\ge3$), under  assumptions {\rm (A)} and
{\rm (B)},
there exist constants $0<\tau_0,\frac{h_0}{\pi}\leq1$ independent of $\vep$, if $0<h\leq h_0$
and $0<\tau\leq\tau_0$, we have for $\alpha\ge2$, i.e.
the well-prepared initial data case,
\begin{equation}\label{eq:mainres}\begin{split}
&\|\psi(\cdot,t_n)-\psi^n_I(\cdot)\|_{L^2}\lesssim h^{m}+\tau^2,
\qquad \|\psi^n\|_{l^\infty}\leq M_1+1,\\ &\|\nabla (\psi(\cdot,t_n)
-\psi^n_I(\cdot))\|_{L^2}\lesssim h^{m-1}+\tau^2,\qquad 0\leq n\leq\frac{T}{\tau},
\end{split}\end{equation}
where $m=\min\{m_0,k\}$ and $M_1$ is defined in (\ref{M1}).
\end{theorem}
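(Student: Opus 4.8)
The plan is to carry out a standard consistency-plus-stability argument for the two-step exponential integrator, but with careful tracking of the $\vep$-dependence so that the constants in the error bound are uniform. I would introduce the error function $e^n := P_M\psi(\cdot,t_n) - \psi_M^n$ (working first at the semi-discrete level and then transferring to the pseudospectral scheme via the standard interpolation/projection estimates $\|(I_M-P_M)\phi\|_{L^2}\lesssim h^m\|\phi\|_{H^m_s}$ and the analogous semi-$H^1$ bound), and split the total error as $\psi - \psi^n_I = (\psi - P_M\psi) + I_M(P_M\psi - \psi_M^n) + (\text{interpolation terms})$. The first and last pieces are controlled by spectral accuracy under assumption (A)–(B), since $\psi\in L^\infty([0,T];H^{m_0}_s)$, giving the $h^m$ (resp.\ $h^{m-1}$) contributions. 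The heart of the matter is then to bound $e^n$ in the discrete $l^2$ and semi-$H^1$ norms by $C\tau^2$.

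The key step is the \emph{local truncation error}. I would plug the exact solution $\widehat\psi_l(t_n)$ into the recursion (\ref{eq:n>1}) and compare with the scheme (\ref{scheme2:2}); the residual comes from two sources: (i) replacing $f_l^n(s) = (\widehat{\Tf(\psi_M)})_l(t_n+s)$ by its linear Taylor expansion $f_l^n(0) + s\,\p_t f_l^n(0)$ inside the integrals, whose error is $O(\int_0^\tau s^2 |\p_{tt}f_l^n| \,|\kappa_l(\tau-s)|\,ds)$, and (ii) replacing $\p_t f_l^n(0)$ by the backward difference $\delta_t^- f_l^n(0) = \Td(\psi_M^n)$. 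For (i), the crucial observation — and this is precisely the point the introduction flags as the advantage of EWI-SP — is that $|\kappa_l(s)| = |e^{is\beta_l^+} - e^{is\beta_l^-}| \lesssim 1$ uniformly in $l$ and $\vep$, and the prefactor $\frac{1}{\vep^2\bd_l} = \frac{1}{\sqrt{1+4\vep^2\mu_l^2}}\lesssim 1$, so the only $\vep$-dependence left is in $\p_{tt}\psi$, which assumption (B) bounds by $\vep^{-(2-\alpha^\ast)} = \vep^{-(2-2)} = 1$ in the well-prepared case $\alpha\ge2$. Thus each of the two integral approximations contributes $O(\tau^3)$ to the one-step error (in the appropriate norm, after summing $l$ against the $H^1$-regularity of $\psi$), and similarly $\delta_t^- \Tf$ versus $\p_t\Tf$ differs by $O(\tau^2)$ times $\|\p_{tt}\Tf(\psi)\|_{H^1}\lesssim \|\p_{tt}\psi\|_{H^1} + \dots \lesssim 1$; I would also need to handle the $n=0$ step (\ref{eq:n=1})–(\ref{eq:approxn=1}) separately, but there $\p_t f_l^0(0)$ is computed exactly so the local error is even better. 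The chain rule bounds on $\Td$ require $f\in C^k$ with $k\ge 3$ and the Sobolev algebra property of $H^1$ (in 1D).

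For \emph{stability}, subtracting the scheme from the exact recursion gives $\widehat{e}_l^{n+1} = c_l\,\widehat{e}_l^{n-1} + d_l\,\widehat{e}_l^n + (\text{nonlinear difference terms}) + (\text{local error})_l$, with $|c_l|=1$ and $|d_l| = 2|\cos(\tau\bd_l/2)|\le 2$; the nonlinear terms involve $p_l(\widehat{\Tf(\psi) - \Tf(\psi_M^n)})_l + q_l(\dots)$ with $|p_l|\lesssim\tau$, $|q_l|\lesssim\tau^2$, so by the local Lipschitz property of $\Tf$ (valid once $\|\psi_M^n\|_{l^\infty}$ is a priori bounded) one gets $\|\text{nonlinear diff}\|\lesssim \tau(\|e^n\| + \|e^{n-1}\|)$ in both the $l^2$ and semi-$H^1$ norms — the semi-$H^1$ estimate works because the Fourier-sine multipliers $c_l,d_l,p_l,q_l$ commute with $\delta_x^+$. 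This is a linear two-step recursion for $\|e^n\| + \|e^{n-1}\|$ with a $(1+C\tau)$ growth factor, so discrete Gronwall yields $\|e^n\| \le e^{CT}(\|e^1\| + \|e^0\| + \frac{1}{\tau}\cdot\text{(accumulated local error)}) \lesssim \tau^2$, since there are $O(1/\tau)$ steps each of size $O(\tau^3)$ and $e^0 = 0$, $\|e^1\|\lesssim\tau^2 + h^m$.

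The main obstacle, as usual in these arguments, is closing the a priori assumption $\|\psi_M^n\|_{l^\infty}\le M_1 + 1$ (needed to invoke the Lipschitz bounds on $\Tf$, $G$, $H$ and the $C^k$ Taylor estimates on a fixed compact set) by induction simultaneously with the error bound. I would argue by induction on $n$: assuming the error and $l^\infty$ bounds up to step $n$, the $l^2$ plus semi-$H^1$ error estimate at step $n+1$ gives, via the discrete inverse/Sobolev inequality $\|\cdot\|_{l^\infty}\lesssim \|\cdot\|_{l^2} + \|\delta_x^+\cdot\|_{l^2}$ in 1D, that $\|\psi^{n+1}_I - \psi\|_{l^\infty}\lesssim h^{m-1} + \tau^2$, which is $\le 1$ once $h\le h_0$ and $\tau\le\tau_0$ are small enough (here it is essential that $m\ge 2$ so $h^{m-1}$ is at worst $h$, no CFL-type constraint $\tau\lesssim h^\beta$ is incurred, and the threshold $\tau_0,h_0$ depend only on $T$, $M_1$, $f$ and the Sobolev norms in (A)–(B), \emph{not} on $\vep$). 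This self-consistency step, together with the uniform-in-$\vep$ boundedness of all the multiplier coefficients established in (\ref{coef1}) and the remark after it, is what makes the final bound uniform in $\vep\in(0,1]$.
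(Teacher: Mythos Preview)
Your consistency analysis is essentially correct, but there is a genuine gap in the stability step. The assertion that the two-step error recursion
\[
\widehat{e}_l^{\,n+1} = c_l\,\widehat{e}_l^{\,n-1} + d_l\,\widehat{e}_l^{\,n} + (\text{nonlinear difference}) + (\text{local error})
\]
has a ``$(1+C\tau)$ growth factor'' does \emph{not} follow from the size bounds $|c_l|=1$, $|d_l|\le 2$ alone. Those bounds give only $|\widehat{e}_l^{\,n+1}| \le |\widehat{e}_l^{\,n-1}| + 2|\widehat{e}_l^{\,n}| + |R^n_l|$, whose homogeneous part has characteristic root $1+\sqrt{2}>1$, i.e.\ growth exponential in the step index $n$ rather than in $T=n\tau$. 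What makes the scheme stable is that the \emph{complex} characteristic polynomial $\lambda^2 - d_l\lambda - c_l$ factors as $(\lambda - e^{i\tau\beta_l^+})(\lambda - e^{i\tau\beta_l^-})$ with both roots on the unit circle, and this must be exploited structurally, not just in magnitude.

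The paper does so by decomposing the local truncation error $\xi^n$ and the nonlinear residual $W^n$ into ``$+$'' and ``$-$'' pieces adapted to the two characteristics (Lemma~\ref{lem:localerr} and (\ref{eq:wnl})): one has $\xi^n = e^{i\tau\beta_l^+}\xi^{n,+} - e^{i\tau\beta_l^-}\xi^{n,-} - e^{i\tau/\vep^2}(\xi^{n-1,+}-\xi^{n-1,-})$, with each piece $\|\xi^{n,\pm}\|\lesssim \tau h^m + \tau^3$, and an analogous splitting of $W^n$. This specific structure --- in which the $(n-1)$-indexed pieces reappear --- is what allows the two-step recursion to telescope exactly into the closed formula (\ref{eq:induc1}),
\[
\widehat{e}_l^{\,n+1} = e^{i(n+1)\tau\beta_l^+}\widehat{e}_l^{\,0,+} + e^{i(n+1)\tau\beta_l^-}\widehat{e}_l^{\,0,-} - \sum_{k=0}^n\Bigl(e^{i(n+1-k)\tau\beta_l^+}\widehat{\xi}_l^{\,k,+} - e^{i(n+1-k)\tau\beta_l^-}\widehat{\xi}_l^{\,k,-}\Bigr) + \sum_{k=1}^n(\cdots),
\]
where every propagator is unimodular. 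Cauchy--Schwarz over the $O(n)$ terms then yields $\|e^{n+1}\|^2 \lesssim \|e^{0,\pm}\|^2 + (n+1)\sum_k\|\xi^{k,\pm}\|^2 + n\sum_k\|W^{k,\pm}\|^2 \lesssim h^{2m} + T^2\tau^4 + T\tau\sum_k\|e^k\|^2$, which is Gronwall-able at the correct order. If instead you diagonalize but treat $\xi^n$ as an undecomposed source of size $O(\tau^3)$, the double summation arising from iterating the two first-order recursions costs an extra factor of $n$ and you lose a full power of $\tau$. The $\pm$ decomposition of the source terms --- not the multiplier bounds --- is the missing ingredient in your outline.
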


Similarly, we have for the ill-prepared initial data case.
\begin{theorem}\label{thm:main2}(Ill-prepared initial data)
Under the same condition of Theorem \ref{thm:main}, there exist constants
$0<\tau_0,\frac{h_0}{\pi}\leq1$ independent of $\vep$, if $0<h\leq h_0$
and $0<\tau\leq\tau_0$, we have
for $\alpha\in[0,2)$,  i.e. the ill-prepared initial data case,
\begin{align}
&\|\psi(\cdot,t_n)-\psi^n_I(\cdot)\|_{L^2}\lesssim h^{m}
+\frac{\tau^2}{\vep^{2-\alpha}},
\quad \|\nabla (\psi(\cdot,t_n)-\psi^n_I(\cdot))\|_{L^2}\lesssim h^{m-1}+\frac{\tau^2}{\vep^{2-\alpha}},\label{ill-res1}\\
&
\|\psi(\cdot,t_n)-\psi^n_I(\cdot)\|_{L^2}\lesssim h^{m}+\tau^2+\vep^2,\quad\|\nabla (\psi(\cdot,t_n)-\psi^n_I(\cdot))\|_{L^2}\lesssim h^{m-1}+\tau^2+\vep^2, \label{ill-res2}\\
 & \|\psi^n\|_{l^\infty}\leq M_1+1,\quad 0\leq n\leq\frac{T}{\tau},\label{ill-res3}
\end{align}
where $m=\min\{m_0,k\}$ and $M_1$ is defined in (\ref{M1}).
Thus, by taking the minimum of $\vep^2$ and $\frac{\tau^2}{\vep^{2-\alpha}}$ for $0<\vep\leq1$,
we could obtain uniform error bounds as
\begin{equation*}
\begin{split}
\|\psi(\cdot,t_n)-\psi^n_I(\cdot)\|_{L^2}\leq h^{m}+\tau^{4/(4-\alpha)},\quad
\|\nabla[\psi(\cdot,t_n)-\psi^n_I(\cdot)]\|_{L^2}\lesssim h^{m-1}+\tau^{4/(4-\alpha)},\quad 0\leq n\leq\frac{T}{\tau}.
\end{split}
\end{equation*}
\end{theorem}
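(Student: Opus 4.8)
I would prove both theorems through a single error‑propagation argument in which the parameter $\alpha^\ast=\min\{\alpha,2\}$ is kept explicit; Theorem \ref{thm:main} is then the case $\alpha^\ast=2$ and \eqref{ill-res1} the case $\alpha^\ast=\alpha$. Work with the mode‑wise error of the semi‑discretization \eqref{scheme1:1}--\eqref{scheme1:2}, $e^n(x)=P_M\psi(\cdot,t_n)-\psi_M^n(x)$, and pass to the pseudospectral approximation $\psi^n_I=I_M(\psi^n)$ by the standard projection/interpolation estimates $\|\psi(\cdot,t_n)-P_M\psi(\cdot,t_n)\|_{L^2}\lesssim h^{m}$, $\|\nabla(\cdot)\|_{L^2}\lesssim h^{m-1}$ together with the aliasing bounds comparing $P_M$ and $I_M$ on $X_M$; these are legitimate since assumptions (A), (B) and Lemma \ref{lem:reg} place $\psi\in L^\infty_tH^{m_0}_s$ and $\partial_t\psi,\partial_{tt}\psi\in L^\infty_tH^1$. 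Because the nonlinearity need only be locally Lipschitz, I would first replace $f$ by a globally bounded, globally Lipschitz modification $\tilde f\in C^k$ agreeing with $f$ on $[0,(M_1+1)^2]$, prove all the estimates unconditionally for $\tilde f$, and then recover $\|\psi^n\|_{l^\infty}\le M_1+1$ — whence $\tilde f\equiv f$ along the numerical trajectory — a posteriori by induction on $n$, using the one‑dimensional discrete embedding $\|e^n\|_{l^\infty}\lesssim\|e^n\|_{l^2}+\|\delta_x^+e^n\|_{l^2}$ and the smallness of the semi‑$H^1$ error bound.

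\textbf{Consistency.} Since the scheme \eqref{scheme2:2} is assembled from the exact variation‑of‑constants representation \eqref{eq:sol}--\eqref{eq:sol2}, substituting the exact solution introduces, in each sine mode, only two errors: the Taylor truncation $f_l^n(s)\approx f_l^n(0)+s\,\partial_tf_l^n(0)$ inside the integrals in \eqref{eq:n=1} and \eqref{eq:n>1}, and, for $n\ge1$, the replacement $\partial_tf_l^n(0)\approx\delta_t^-f_l^n(0)$. Both remainders are $O(\tau^2\,\partial_{tt}\Tf(\psi))$; integrated against $\kappa_l$ (which is $O(1)$) and divided by $\vep^2\bd_l=O(1)$ — respectively multiplied by the $O(\tau^2)$ coefficient $q_l$ — they produce a local truncation error $\eta^n$ with $\|\eta^n\|_{H^1}\lesssim\tau^3\,\|\partial_{tt}\Tf(\psi)\|_{L^\infty([0,T];H^1)}$. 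As $H^1(\Omega)$ is an algebra in 1D and $f\in C^k$ with $k\ge3$, $\partial_{tt}\Tf(\psi)$ is a polynomial in $\psi,\partial_t\psi,\partial_{tt}\psi$ and derivatives of $f$ at bounded arguments, so (B) yields $\|\partial_{tt}\Tf(\psi)\|_{L^\infty_tH^1}\lesssim\|\partial_{tt}\psi\|_{L^\infty_tH^1}+\|\partial_t\psi\|_{L^\infty_tH^1}^2\lesssim\vep^{-(2-\alpha^\ast)}$, hence $\|\eta^n\|_{H^1}\lesssim\tau^3\vep^{-(2-\alpha^\ast)}$ (the first step $n=0$ has the same bound, only the Taylor truncation occurring there).

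\textbf{Stability and Gronwall.} Subtracting the exact and numerical recursions gives, mode by mode, $\widehat{e}_l^{n+1}=c_l\widehat{e}_l^{n-1}+d_l\widehat{e}_l^{n}+(\text{nonlinear error})+\widehat{\eta}_l^{n}$; under the induction hypothesis and the bounds $|p_l|,|p_l^\ast|\lesssim\tau$, $|q_l|,|q_l^\ast|\lesssim\tau^2$ (with the $\delta_t^-$ in $\Td$ absorbing one power of $\tau$), the nonlinear error is bounded in $H^1$ by $\tau(\|e^n\|_{H^1}+\|e^{n-1}\|_{H^1})$. The point for uniformity in $\vep$ is that the homogeneous part has characteristic roots exactly $e^{i\tau\beta_l^+}$ and $e^{i\tau\beta_l^-}$, both of modulus one; I would therefore not estimate the two‑step iteration matrix directly — its eigenvector matrix degenerates when $\tau\bd_l\in2\pi\mathbb{Z}$ — but track the two characteristic components $\gamma_l^n,\nu_l^n$ of \eqref{eq:sol2} attached to the numerical solution, each of which propagates as an $\ell^2$‑isometry up to the forcing. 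This gives $\|e^{n+1}\|_{H^1}\le(1+C\tau)(\|e^n\|_{H^1}+\|e^{n-1}\|_{H^1})+\|\eta^n\|_{H^1}$, and the discrete Gronwall inequality yields $\|e^n\|_{H^1}\lesssim\|e^0\|_{H^1}+\|e^1\|_{H^1}+\sum_{k<n}\|\eta^k\|_{H^1}\lesssim h^{m-1}+\tfrac1\tau\,\tau^3\vep^{-(2-\alpha^\ast)}=h^{m-1}+\tau^2\vep^{-(2-\alpha^\ast)}$. Adding the spatial errors gives the $L^2$ and semi‑$H^1$ bounds with $h^{m}$, $h^{m-1}$; setting $\alpha^\ast=2$ proves Theorem \ref{thm:main}, and $\alpha^\ast=\alpha$ proves \eqref{ill-res1}.

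\textbf{The $\vep^2$-type bound and the main obstacle.} For \eqref{ill-res2} I would compare the EWI‑SP run from the ill‑prepared velocity $\psi_1^\vep=\psi_1+\vep^\alpha\omega^\vep$ with the run $\Phi^n_I$ from the NLS‑compatible velocity $\psi_1$: since $\psi_1^\vep$ enters the scheme only at the first step and only through the $O(\tau^2)$ coefficients $d_l^0$ and $q_l$ (via $\Td(\psi_0)$), the two numerical trajectories differ at step $1$ by $O(\tau^2\vep^\alpha\|\omega^\vep\|)$ and, both solving the same Lipschitz discrete equation thereafter, by $O(\tau^2)$ for all $n\le T/\tau$. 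The well‑prepared analysis applies to the exact solution $\Phi$ of the NLSW with velocity $\psi_1$ (which satisfies (B) with $\alpha^\ast=2$), giving $\|\Phi(\cdot,t_n)-\Phi^n_I\|_{H^1}\lesssim h^{m-1}+\tau^2$, while $\|\psi(\cdot,t_n)-\Phi(\cdot,t_n)\|_{H^1}\le\|\psi-\psi^s\|_{H^1}+\|\psi^s-\Phi\|_{H^1}\lesssim\vep^2$ by (B); combining the three pieces gives \eqref{ill-res2}, and \eqref{ill-res3} together with the stated uniform bound follows from the envelope $\min\{\vep^2,\tau^2\vep^{-(2-\alpha)}\}\le\tau^{4/(4-\alpha)}$. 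The genuinely delicate step is the uniform‑in‑$\vep$ stability: the two‑step recursion is only marginally stable and has real resonances (when $\tau\sqrt{1+4\vep^2|\mu_l|^2}/\vep^2$ is near a multiple of $2\pi$), so a crude matrix bound loses a factor $\tau^{-1}$ and must be replaced by the characteristic‑variable energy estimate above; a close second is keeping the consistency constant honestly independent of $\vep$, which works precisely because the exponential integrator involves no time derivative of $\psi$ beyond the second, so $\vep^{-(2-\alpha^\ast)}$ is the only singular factor.
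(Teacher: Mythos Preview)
Your overall architecture is right and close to the paper's, but the stability step contains a genuine gap. The inequality you write, $\|e^{n+1}\|_{H^1}\le(1+C\tau)(\|e^n\|_{H^1}+\|e^{n-1}\|_{H^1})+\|\eta^n\|_{H^1}$, does \emph{not} follow from tracking the characteristic components and is in fact useless: its homogeneous part has growth rate $\frac{1+\sqrt5}{2}>1$ per step, so over $T/\tau$ steps it blows up regardless of $\tau$. The point of the characteristic decomposition is precisely that you must \emph{stay} in those variables. The paper does this by iterating the two--step recursion explicitly to obtain, mode by mode,
\[
(\widetilde{e^{n+1}})_l=\sum_{\pm}\Bigl(e^{i(n+1)\tau\beta_l^\pm}(\widetilde{e^{0,\pm}})_l
-\sum_{k=0}^n e^{i(n+1-k)\tau\beta_l^\pm}(\widetilde{\xi^{k,\pm}})_l
+\sum_{k=1}^n e^{i(n+1-k)\tau\beta_l^\pm}\widetilde{W}^{k,\pm}_l\Bigr),
\]
then uses $|e^{i\theta}|=1$ together with Cauchy--Schwarz to get $\|e^{n+1}\|^2\lesssim\|e^{0,\pm}\|^2+(n{+}1)\sum_k\|\xi^{k,\pm}\|^2+n\sum_k\|W^{k,\pm}\|^2$, and only then applies Gronwall through the nonlinear term $\|W^{k,\pm}\|\lesssim\tau\|e^k\|$. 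Your idea is exactly this, but the inequality you wrote down is not; you need the summed form (or an equivalent energy on the pair $(\gamma_l^n,\nu_l^n)$) to close the argument.

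For \eqref{ill-res2} your route is genuinely different from the paper's and somewhat more delicate. The paper compares the numerical solution directly to the NLS solution $\psi^s$: one rewrites the NLS as $i\partial_t\psi^s-\vep^2\partial_{tt}\psi^s+\partial_{xx}\psi^s+(\Tf(\psi^s)+\vep^2\partial_{tt}\psi^s)=0$, so that the local truncation error picks up an extra $O(\tau\vep^2)$ from the $\vep^2\partial_{tt}\psi^s$ term but \emph{no} $\vep^{-(2-\alpha)}$, since $\|\partial_{tt}\psi^s\|_{H^1}\lesssim1$ is already in (B); the initial mismatch $\psi_1^\vep-\psi_1=\vep^\alpha\omega^\vep$ enters only through $d_l^0$, for which the key bound is $|d_l^0|=2|\sin(\tau\bd_l/2)|/\bd_l\le 2\vep^2$ (not $\tau^2$), giving an $O(\vep^{2+\alpha})$ contribution. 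Your alternative---introducing an auxiliary well--prepared NLSW solution $\Phi$ with velocity $\psi_1$ and appealing to Theorem~\ref{thm:main} for it---can be made to work, but it requires that $\Phi$ itself satisfy the uniform bounds in (B) (in particular $\|\partial_{tt}\Phi\|_{L^\infty_tH^1}\lesssim1$ and $\|\Phi-\psi^s\|_{L^\infty_tH^1}\lesssim\vep^2$), which are not part of the stated hypotheses. The paper's choice of $\psi^s$ avoids this extra assumption entirely.
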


\begin{remark}The main results can be extended directly to the system of $N$ ($N\ge2$) coupled nonlinear Schr\"odinger equations with wave operator (NLSW) as
\begin{equation*}
i\p_t \psi_k-\vep^2 \p_{tt}\psi_k(x,t)+\nabla^2
\psi_k(x,t)+f_k(|\psi_1|^2,\ldots,|\psi_N|^2)\psi_k(x,t)=0,\quad k=1,\ldots,N,
\end{equation*}
where the initial data and boundary conditions for each $\psi_k$ are similarly
given as (\ref{eq:nls_wave}) and (\ref{eq:ini}), and the  functions $f_k:[0,\infty)^N\to \Bbb R$ ($k=1,\ldots,N$) belong to $C^k$ ($k\ge3$) function class.
\end{remark}

Since the exact solution behaves very differently for the well-prepared initial data ($\alpha\ge2$) and the ill-prepared initial data ($0\leq\alpha<2$) (see (\ref{eq:asy_nlsw}) and \cite{Baoc}), we treat these two cases separately. For the  well-prepared initial data, in order to prove Theorem \ref{thm:main}, we will compare the EWI-SP approximation (\ref{scheme2:1})-(\ref{scheme2:4}) with the $L^2$ projection of the exact solution.
Following the steps in  section \ref{subsec:ewi}, where we construct the EWI-SP (\ref{scheme2:1})-(\ref{scheme2:4}), it is easy to find that the  errors introduced only come
from the trigonometric interpolation (spectral accurate in space) and Gautschi type quadrature \cite{Gau,Hoch} for the integrals in (\ref{eq:sol}) (second order accurate in time).
This observation gives Lemma \ref{lem:localerr} for the local truncation error, which is of spectral order in space and second order in time. In addition, because we use a Gautschi type quadrature,  the scheme (\ref{scheme2:1})-(\ref{scheme2:4}) posses a very nice recursive property which leads to the  nice recursion formula for the error (\ref{eq:induc1}).
 These are the key points in our analysis. In the proof, we also use the cutoff technique for the nonlinear term, discrete Sobolev inequality  and the  energy method.

For the ill-prepared initial data, in order to prove Theorem \ref{thm:main2} with two different estimates, we adopt a  strategy similar to the finite difference method \cite{Baoc}  (cf. diagram (\ref{chart})). The idea for establishing the estimates is analogous to that of Theorem \ref{thm:main}. The same analysis works for 2D and 3D cases, and we discuss  this
in  Remark \ref{rmk:ext}

\section{Convergence in the well-prepared initial data case}
Before the analysis of convergence, we first review and introduce some lemmas for the projection operator
$P_M$ and interpolation operator $I_M$ \cite{Shen,Kreiss}.
\begin{lemma}\label{lem:l2}($L^2$ projection) Let $\phi(x)\in H_s^{m}(\Omega)$ ($m\ge2$), then
\be
\|\phi(\cdot)-P_M(\phi)(\cdot)\|_{L^2}\lesssim \pi^{-m}h^{m},\quad \|\nabla[\phi(\cdot)-P_M(\phi)(\cdot)]\|_{L^2}\lesssim
\pi^{1-m}h^{m-1}.
\ee
\end{lemma}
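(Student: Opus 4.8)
The plan is to argue entirely in the sine-series representation, where $P_M$ acts as a simple truncation. I would write $\phi(x)=\sum_{l=1}^{\infty}\widehat{\phi}_l\,\Phi_l(x)$ with $\Phi_l(x)=\sin(\mu_l(x-a))$ and $\mu_l=\pi l/(b-a)$; the definition of the $L^2$ projection onto $X_M$ then gives $P_M\phi=\sum_{l=1}^{M-1}\widehat{\phi}_l\,\Phi_l$, so that $\phi-P_M\phi=\sum_{l\ge M}\widehat{\phi}_l\,\Phi_l$. Since the $\Phi_l$ are mutually orthogonal in $L^2(\Omega)$ with $\|\Phi_l\|_{L^2}^2=(b-a)/2$, and $\nabla\Phi_l=\mu_l\cos(\mu_l(x-a))$ forms an orthogonal family with the same norm, Parseval's identity yields
\[
\|\phi-P_M\phi\|_{L^2}^2=\frac{b-a}{2}\sum_{l\ge M}|\widehat{\phi}_l|^2,\qquad
\|\nabla(\phi-P_M\phi)\|_{L^2}^2=\frac{b-a}{2}\sum_{l\ge M}\mu_l^2\,|\widehat{\phi}_l|^2 .
\]

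The key point is the elementary observation that the spectral cutoff index $M$ and the mesh size $h=(b-a)/M$ are linked by $\mu_M=\pi M/(b-a)=\pi/h$, whence $\mu_l\ge\pi/h$ for every $l\ge M$. Using this to trade $2m$ (resp.\ $2m-2$) powers of $\mu_l$ against the factor $h/\pi$, I would estimate, for $l\ge M$,
\[
|\widehat{\phi}_l|^2\le(h/\pi)^{2m}\,\mu_l^{2m}|\widehat{\phi}_l|^2,\qquad
\mu_l^2\,|\widehat{\phi}_l|^2\le(h/\pi)^{2m-2}\,\mu_l^{2m}|\widehat{\phi}_l|^2 .
\]
Summing over $l\ge M$ and then enlarging the range to all $l\ge1$ bounds both right-hand sides by a constant multiple of $\sum_{l\ge1}\mu_l^{2m}|\widehat{\phi}_l|^2=\|\phi\|_{H_s^m}^2$. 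This gives $\|\phi-P_M\phi\|_{L^2}\lesssim(h/\pi)^m\,\|\phi\|_{H_s^m}$ and $\|\nabla(\phi-P_M\phi)\|_{L^2}\lesssim(h/\pi)^{m-1}\,\|\phi\|_{H_s^m}$, which are exactly the claimed bounds after absorbing the ($\vep$-, $h$-, $\tau$-independent) factor $\|\phi\|_{H_s^m}$ into the generic constant.

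There is no real obstacle in this computation; the only points that deserve a word of care are two. First, the hypothesis $\phi\in H_s^m(\Omega)$ is precisely what makes $\|\phi\|_{H_s^m}$ finite: by the norm equivalence recorded in the excerpt, the built-in boundary conditions $\partial_x^{2k}\phi(a)=\partial_x^{2k}\phi(b)=0$ for $0\le 2k<m$ are what force the sine-series $H^m$-norm to be comparable with the genuine $H^m$-norm (no boundary terms appear when one differentiates the series term by term). Second, the index bookkeeping $\mu_M=\pi/h$ is what converts the Galerkin cutoff into a power of the grid size. With these in hand the estimate follows directly, and the same argument transfers verbatim to the tensor-product sine spaces used in 2D and 3D.
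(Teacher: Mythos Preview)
Your argument is correct and is exactly the standard proof of this projection estimate. The paper does not actually prove this lemma; it merely cites it as a known result from the references \cite{Shen,Kreiss}, so there is nothing to compare beyond noting that your write-up is the argument one finds in those sources.
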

For the sine trigonometric interpolation $I_M$, we have
similar results.
\begin{lemma}\label{lem:sine}(Sine interpolation)
Let $\phi(x)\in H_0^{1}(\Omega)$ and $\phi=(\phi_0,\phi_1,\ldots,\phi_M)^T$ with $\phi_j=\phi(x_j)$ ($j\in\Tt_M^0$), we have
\begin{equation*}
\|I_M(\phi)(\cdot)\|_{L^2}\leq\|\phi(\cdot)\|_{L^2}+h\|\nabla\phi(\cdot)\|_{L^2},
\quad\|\delta_x^+\phi\|_{l^2}\leq\|\nabla I_M(\phi)(\cdot)\|_{L^2}\leq \frac{\pi}{2} \|\delta_x^+\phi\|_{l^2}.
\end{equation*}
$I_M$ and $P_M$ are identity transforms on $X_M$, and if $\phi(x)\in X_M$,
\be\|\nabla\phi(\cdot)\|_{L^2}\lesssim h^{-1}\|\phi(\cdot)\|_{L^2}.\ee
\end{lemma}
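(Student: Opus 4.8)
The plan is to reduce every assertion of Lemma \ref{lem:sine} to three orthogonality relations for the sine nodes, after which each claim becomes a Parseval-type computation. These are the continuous relation $\int_a^b\Phi_l(x)\Phi_{l'}(x)\,dx=\frac{b-a}{2}\delta_{ll'}$, the discrete relation $\frac{2}{M}\sum_{j=1}^{M-1}\sin\frac{jl\pi}{M}\sin\frac{jl'\pi}{M}=\delta_{ll'}$ for $l,l'\in\Tt_M$, and the companion cosine identity $\sum_{j=0}^{M-1}\cos\frac{(2j+1)l\pi}{2M}\cos\frac{(2j+1)l'\pi}{2M}=\frac{M}{2}\delta_{ll'}$ for $l,l'\in\Tt_M$ (the last one I would verify by the product-to-sum formula, noting that for $|l\pm l'|<2M$ the relevant node sums vanish except on the diagonal). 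I would establish these first and then dispatch the four statements in increasing order of difficulty.

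The reproduction property and the inverse inequality are immediate from these tools. For $\phi=\sum_{l=1}^{M-1}c_l\Phi_l\in X_M$, continuous orthogonality gives $\widehat{\phi}_l=c_l$, so $P_M\phi=\phi$; evaluating at the grid and applying discrete orthogonality gives $\widetilde{\phi}_l=c_l$, so $I_M\phi=\phi$. For the inverse inequality I would use Parseval in both norms, $\|\phi\|_{L^2}^2=\frac{b-a}{2}\sum_l|c_l|^2$ and $\|\nabla\phi\|_{L^2}^2=\frac{b-a}{2}\sum_l\mu_l^2|c_l|^2$, together with $\mu_l=\pi l/(b-a)\le\pi(M-1)/(b-a)<\pi/h$, to conclude $\|\nabla\phi\|_{L^2}\le(\pi/h)\|\phi\|_{L^2}\lesssim h^{-1}\|\phi\|_{L^2}$.

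For the $L^2$ stability bound I would first prove the exact identity $\|I_M\phi\|_{L^2}=\|\phi\|_{l^2}$, where the right side is the discrete norm of the sampling vector $(\phi(x_j))$: continuous orthogonality gives $\|I_M\phi\|_{L^2}^2=\frac{b-a}{2}\sum_l|\widetilde{\phi}_l|^2$, and discrete Parseval $\sum_l|\widetilde{\phi}_l|^2=\frac{2}{M}\sum_{j=1}^{M-1}|\phi(x_j)|^2$ turns this into $h\sum_{j=1}^{M-1}|\phi(x_j)|^2=\|\phi\|_{l^2}^2$. It then remains to control the sampling norm by the continuous norms. I would split $\phi(x_j)$ into its left-cell average $\bar\phi_j=\frac1h\int_{x_{j-1}}^{x_j}\phi$ and the fluctuation $\phi(x_j)-\bar\phi_j$; Cauchy--Schwarz gives $\|\bar\phi\|_{l^2}\le\|\phi\|_{L^2}$, while $|\phi(x_j)-\bar\phi_j|\le\int_{x_{j-1}}^{x_j}|\phi'|\le h^{1/2}\big(\int_{x_{j-1}}^{x_j}|\phi'|^2\big)^{1/2}$ yields $\|\phi-\bar\phi\|_{l^2}\le h\|\nabla\phi\|_{L^2}$, and the triangle inequality gives the stated bound with constant one. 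The cell-average splitting is essential here: the naive pointwise estimate $|a|^2\le2|b|^2+2|c|^2$ would cost a spurious $\sqrt2$ and miss the sharp constant.

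The delicate step is the two-sided seminorm equivalence, and this is where I expect the main obstacle. I would expand the forward difference of the interpolant via $\sin\frac{(j+1)l\pi}{M}-\sin\frac{jl\pi}{M}=2\sin\frac{l\pi}{2M}\cos\frac{(2j+1)l\pi}{2M}$, giving $\delta_x^+\phi_j=\frac2h\sum_l\widetilde{\phi}_l\sin\frac{l\pi}{2M}\cos\frac{(2j+1)l\pi}{2M}$. Applying the cosine orthogonality over $j=0,\dots,M-1$ collapses all cross terms, and after inserting $h=(b-a)/M$ and $\mu_l=\pi l/(b-a)$ I would obtain the clean diagonal identity
\[
\|\delta_x^+\phi\|_{l^2}^2=\sum_{l=1}^{M-1}\sinc^2\!\Big(\frac{l\pi}{2M}\Big)\,a_l,\qquad \|\nabla I_M\phi\|_{L^2}^2=\sum_{l=1}^{M-1}a_l,\qquad a_l=\tfrac{b-a}{2}\,\mu_l^2\,|\widetilde{\phi}_l|^2\ge0,
\]
so the two quantities differ only by the mode weights $\sinc^2(l\pi/(2M))$. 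The proof then closes by the monotonicity of $\sinc$ on $(0,\pi/2)$: for $l\in\Tt_M$ the argument $l\pi/(2M)$ lies strictly in $(0,\pi/2)$, where $\frac2\pi<\sinc(\theta)\le1$, hence $4/\pi^2<\sinc^2\le1$ term by term. Summing against the nonnegative weights $a_l$ yields $\|\delta_x^+\phi\|_{l^2}^2\le\|\nabla I_M\phi\|_{L^2}^2$ and $\|\nabla I_M\phi\|_{L^2}^2\le\frac{\pi^2}{4}\|\delta_x^+\phi\|_{l^2}^2$, which are exactly the claimed inequalities with sharp constants $1$ and $\pi/2$ (the value $\pi/2$ being forced by $\sinc(\pi/2)=2/\pi$). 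The hard part is precisely identifying the cosine structure of the forward difference and invoking the matching discrete cosine orthogonality on the half-shifted nodes $\frac{(2j+1)\pi}{2M}$; once that is in place, the $\sinc$ estimate makes the constants transparent.
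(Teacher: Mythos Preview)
Your argument is correct. For the seminorm equivalence, the identity $\|\delta_x^+\phi\|_{l^2}^2=\sum_l\sinc^2(\mu_lh/2)\,a_l$ and the subsequent $\sinc$ bound on $(0,\pi/2)$ are exactly what the paper does; the reproduction property and inverse inequality are also treated identically.

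The only genuine divergence is in the $L^2$ stability bound. You decompose the nodal values as $\phi(x_j)=\bar\phi_j+(\phi(x_j)-\bar\phi_j)$ with $\bar\phi_j$ the left-cell average, bound the two pieces separately in $l^2$, and finish with the triangle inequality. The paper instead works directly on the squared norm: after the same identity $\|I_M\phi\|_{L^2}^2=h\sum_j|\phi(x_j)|^2$, it writes
\[
h\sum_j|\phi(x_j)|^2=\|\phi\|_{L^2}^2-\sum_{j}\int_{x_j}^{x_{j+1}}\!\!\int_{x_j}^{x}\partial_x|\phi|^2\,dx'\,dx
\le\|\phi\|_{L^2}^2+2h\|\phi\|_{L^2}\|\nabla\phi\|_{L^2},
\]
and then completes the square to get $(\|\phi\|_{L^2}+h\|\nabla\phi\|_{L^2})^2$. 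Both routes yield the sharp constant~$1$; your cell-average splitting is a bit more modular (it bounds $\|\phi\|_{l^2}$ itself rather than its square and avoids differentiating $|\phi|^2$), while the paper's manipulation is shorter and needs no auxiliary object.
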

\begin{proof}The lemma can be proved analogous to  \cite{Shen,Kreiss}.
We prove the first statement here for reader's convenience.
By definition of $I_M$, using Cauchy inequality, it is
easy to check that
\begin{equation}
\begin{split}
\|I_M(\phi)(\cdot)\|_{L^2}^2=&h\sum\limits_{j=1}^{M-1}|\phi(x_j)|^2=\|\phi(\cdot)\|_{L^2}^2
-\sum\limits_{j=0}^{M-1}\int_{x_j}^{x_{j+1}}\left(|\phi(\bx)|^2-|\phi(x_j)|^2\right)\,d\bx\\
=&\|\phi(\cdot)\|_{L^2}^2
-\sum\limits_{j=0}^{M-1}\int_{x_j}^{x_{j+1}}\int_{x_j}^{\bx}\p_x|\phi(\bx^\prime)|^2\,d\bx^\prime\,d\bx\\
\leq&\|\phi(\cdot)\|_{L^2}^2+2h\|\nabla\phi(\cdot)\|_{L^2}\|\phi(\cdot)\|_{L^2},
\end{split}
\end{equation}
which implies the first conclusion by using Cauchy inequality. For the second
conclusion, we note that
\begin{equation*}
\|\nabla I_M(\phi)(\cdot)\|_2^2=\frac{b-a}{2}\sum\limits_{l=1}^{M-1}|\mu_l|^2|\widetilde{\phi}_l|^2,\,
\quad \delta_x^+\phi(x_j)=\sum\limits_{l=0}^{M-1}\frac{2}{h}\widetilde{\phi}_l\sin(\frac{\mu_lh}{2})\cos(\mu_l (j+\frac12)h),\quad 0\leq j\leq M-1,
\end{equation*}
thus by Parseval's identity, we know
\be
\|\delta_x^+\phi\|_{l^2}^2=\frac{hM}{2}\sum\limits_{l=0}^{M-1}|\mu_l|^2|\widetilde{\phi}_l|^2
|\sinc(\frac{\mu_lh}{2})|^2.
\ee
Recalling that $0<\frac12\mu_lh\leq\frac\pi2$ ($l\in\Tt_M$),
$\sinc(\frac{\mu_lh}{2})\in[\frac{2}{\pi},1]$, we get the second conclusion.
\end{proof}

For  $I_M$ applied to the nonlinear term,
we require some regularity to achieve the desired accuracy.
\begin{lemma}\label{lem:reg} For $\phi(x)\in H_s^{m}(\Omega)$ ($m\ge2$)
and $f(\cdot)\in C^k([0,\infty))$ ($k\ge2$),  we have
\be
\Tf(\phi)=f(|\phi|^2)\phi\in H_s^{m^\ast},\quad \text{with}
\quad \|\Tf(\phi)\|_{H_s^{m^\ast}}\lesssim
\|\phi\|_{H_s^{m^\ast}},\quad m^\ast=\min\{m,k\}.
\ee
\end{lemma}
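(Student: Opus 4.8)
\textbf{Proof proposal for Lemma \ref{lem:reg}.}

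The plan is to reduce the statement to an algebra/Moser-type estimate for the composition map $\phi \mapsto f(|\phi|^2)\phi$ in the Sobolev scale $H^{m^\ast}_s(\Omega)$, combined with a check that the boundary conditions defining $H^{m^\ast}_s$ are preserved. First I would recall that on the bounded interval $\Omega=(a,b)$, the space $H^{m^\ast}_s$ is a subspace of $H^{m^\ast}$ on which the norm $\|\cdot\|_{H^{m^\ast}_s}$ is equivalent to the full $H^{m^\ast}$ norm (as stated just before the main results), and that for $m^\ast\ge 2 > d/2 = 1/2$ (we are in 1D) $H^{m^\ast}$ is a Banach algebra continuously embedded in $L^\infty\cap C(\overline\Omega)$. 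Hence it suffices to prove $\|f(|\phi|^2)\phi\|_{H^{m^\ast}}\lesssim \|\phi\|_{H^{m^\ast}}$ together with the vanishing of the even-order derivatives of $\Tf(\phi)$ at the endpoints.

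The core estimate proceeds by the standard Moser / Gagliardo–Nirenberg argument. Writing $g(\phi,\overline\phi)=f(|\phi|^2)\phi$, which is a $C^k$ function of $(\mathrm{Re}\,\phi,\mathrm{Im}\,\phi)$ with $g(0)=0$, one expands $\partial_x^j g(\phi,\overline\phi)$ by the Fa\`a di Bruno formula into a finite sum of terms of the form $(\partial^{\,\ell}_{(\phi,\overline\phi)} g)(\phi,\overline\phi)\,\prod_{i} \partial_x^{j_i}\phi^{(\cdot)}$ with $\sum_i j_i = j \le m^\ast$, $\ell\le j\le k$, so all the derivatives of $g$ that appear are continuous; since $\|\phi\|_{L^\infty}\lesssim\|\phi\|_{H^{m^\ast}}$ stays in a bounded set, $\|\partial^{\,\ell}_{(\phi,\overline\phi)} g(\phi,\overline\phi)\|_{L^\infty}\lesssim 1$ uniformly. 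Each product $\prod_i \partial_x^{j_i}\phi$ is then estimated in $L^2$ by the Gagliardo–Nirenberg interpolation inequality $\|\partial_x^{j_i}\phi\|_{L^{p_i}}\lesssim \|\phi\|_{L^\infty}^{1-\theta_i}\|\phi\|_{H^{m^\ast}}^{\theta_i}$ with $\theta_i = j_i/m^\ast$ and $\sum_i 1/p_i = 1/2$, giving $\|\prod_i \partial_x^{j_i}\phi\|_{L^2}\lesssim \|\phi\|_{H^{m^\ast}}^{\sum_i \theta_i}\lesssim\|\phi\|_{H^{m^\ast}}$ because $\sum_i\theta_i = (\sum_i j_i)/m^\ast = j/m^\ast\le 1$ and $\|\phi\|_{H^{m^\ast}}$ is bounded below away from $0$ in the relevant range (or one simply keeps the lower-order factor $\|\phi\|_{L^\infty}$ and bounds it by $\|\phi\|_{H^{m^\ast}}$). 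Summing over $j=0,\dots,m^\ast$ and over the finitely many Fa\`a di Bruno terms yields $\|\Tf(\phi)\|_{H^{m^\ast}}\lesssim\|\phi\|_{H^{m^\ast}}\simeq\|\phi\|_{H^{m^\ast}_s}$.

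Finally I would verify the boundary conditions: $\Tf(\phi)\in H^{m^\ast}_s$ requires $\partial_x^{2r}\Tf(\phi)(a)=\partial_x^{2r}\Tf(\phi)(b)=0$ for $0\le 2r<m^\ast$. This follows inductively from $\phi\in H^m_s$, i.e.\ $\partial_x^{2r}\phi(a)=\partial_x^{2r}\phi(b)=0$: in the Fa\`a di Bruno expansion of $\partial_x^{2r}\Tf(\phi)$, each term contains at least one factor of the form $\partial_x^{j_i}\phi$ with $j_i$ \emph{even} — indeed, since $g$ is odd in $(\phi,\overline\phi)$, every monomial in the expansion has an odd number of factors, their orders sum to the even number $2r$, so at least one order is even and $\le 2r < m$, hence that factor vanishes at the endpoints and so does the whole term (all other factors and the coefficient $\partial^\ell_{(\phi,\overline\phi)} g$ being continuous up to the boundary). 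Thus $\partial_x^{2r}\Tf(\phi)$ vanishes at $a$ and $b$ for all $0\le 2r<m^\ast$, which is the required membership in $H^{m^\ast}_s$.

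The main obstacle is the combinatorial bookkeeping in the Fa\`a di Bruno step together with the parity argument for the boundary conditions: one must argue carefully that in the multivariate chain rule for $f(|\phi|^2)\phi$ every term has a factor of even derivative-order, and that the interpolation exponents add up correctly so that no power of $\|\phi\|_{H^{m^\ast}}$ higher than the first appears. Both are routine but must be stated precisely; everything else (the algebra property, the embedding $H^{m^\ast}\hookrightarrow L^\infty$, the equivalence of norms on $H^{m^\ast}_s$) is classical.
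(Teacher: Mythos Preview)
Your argument is essentially correct, but the paper takes a slicker and genuinely different route. Rather than working on $\Omega$ and checking the boundary conditions of $H_s^{m^\ast}$ by hand, the paper observes that membership in $H_s^{m}(\Omega)$ is equivalent to the \emph{odd} $2(b-a)$--periodic extension of $\phi$ lying in $H^m$ of the doubled interval (this is precisely what the sine--series characterization encodes), and that $\Tf(-\phi)=-\Tf(\phi)$, so $\Tf$ preserves oddness of the extension; the $H^{m^\ast}$ bound then follows from the standard Moser estimate on the torus, and the $H_s^{m^\ast}$ boundary conditions come for free from the oddness. Your approach trades this single structural observation for Fa\`a~di~Bruno bookkeeping plus an explicit endpoint parity argument; it is more hands--on but reaches the same conclusion. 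One small caveat: in the usual Fa\`a~di~Bruno expansion the number $\ell$ of derivative factors $\partial_x^{j_i}\phi$ with $j_i\ge 1$ is \emph{not} forced to be odd by the oddness of $g$; what actually happens is that when $\ell$ is even the coefficient $(\partial^{\,\ell}_{(\phi,\overline\phi)}g)(\phi,\overline\phi)$ is itself odd in $(\phi,\overline\phi)$ and hence vanishes at the endpoints where $\phi=0$, while when $\ell$ is odd your pigeonhole argument (odd many $j_i$'s summing to an even number forces one even $j_i$) applies. Equivalently, write each term as $f^{(p)}(|\phi|^2)$ times a product of an odd number of factors $\partial_x^{j_i}\phi,\partial_x^{j_i}\overline\phi$ with $j_i\ge 0$ allowed, and then your parity count is literally correct. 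Either way the fix is immediate, and the paper's odd--extension viewpoint subsumes both cases at once.
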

\begin{proof}This is  essentially due to the fact that
$\Tf(\phi)$ maps an odd function to an odd function.
The results can be checked by  direct computation
and we omit it here.
\end{proof}

Using Lemmas \ref{lem:l2} and \ref{lem:sine}, we find
for any $\Phi\in H_0^1$,
\be\label{eq:interr}
\|I_M(\Phi)-P_M(\Phi)\|_{L^2}= \|I_M[\Phi-P_M(\Phi)]\|_{L^2}\lesssim
\|\Phi-P_M(\Phi)\|_{L^2}+h\|\nabla(\Phi-P_M(\Phi))\|_{L^2}.
\ee

Let $\psi(x,t)$ be the  solution of the NLSW (\ref{eq:nls_wave}),
we write $\psi(t)$ in short of $\psi(x,t)$. In order to prove
the main theorem, we define the `local truncation error' $\xi^n(x)=\sum\limits_{l=1}^{M-1}(\widetilde{\xi^n})_l\Phi_l(x)\in X_M$
for  $l\in\Tt_M$ as
\begin{align}\label{xidef}
(\widetilde{\xi^{0})}_l=&c_l^0(\widehat{\psi_0})_l+ d_l^0(\widehat{\psi_1^\vep})_l
+p_l(\widetilde{\Tf(\psi_0)})_l
+q_l(\widetilde{\Td(\psi_0)})_l
-(\widehat{\psi(\tau)})_l,\nn\\
(\widetilde{\xi^n})_l=&c_l(\widehat{\psi(t_{n-1})})_l+ d_l(\widehat{\psi(t_n)})_l
+p_l(\widetilde{\Tf(\psi(t_n))})_l
+q_l(\widetilde{\Td(\psi(t_n))})_l
-p_l^{\ast}(\widetilde{\Tf(\psi_0)})_l\\&
-q_l^{\ast}(\widetilde{\Td(\psi(t_{n-1}))})
-(\widehat{\psi(t_{n+1})})_l,\qquad n\ge1.\nn
\end{align}
Then we have the following results.
\begin{lemma}\label{lem:localerr}Let $\xi^n(x)$ ($n\ge0$) be
defined as (\ref{xidef}), $f(\cdot)\in C^k([0,\infty))$ ($k\ge3$),
under assumptions {\rm (A)} and {\rm (B)}, we have the following decomposition
of $\xi^n(x)$ ($n\ge0$) (\ref{eq:xideco}),
 \begin{equation}\label{deco}\begin{split}
 &\xi^0(x)=e^{i\tau\beta_l^+}\xi^{0,+}(x)-e^{i\tau\beta_l^-}\xi^{0,-}(x),\qquad\\
 &\xi^n(x)=e^{i\tau\beta_l^+}\xi^{n,+}(x)-e^{i\tau\beta_l^-}\xi^{n,-}(x)
 -e^{\frac{i\tau}{\vep^2}}\left(\xi^{n-1,+}(x)-\xi^{n-1,-}(x)\right),\quad n\ge1,
 \end{split}
 \end{equation}
 where $\xi^{n,\pm}(x)=\sum\limits_{l=1}^{M-1}
 (\widetilde{\xi^{n,\pm}})_l\Phi_l(x)\in X_M$
 and for $\alpha\ge2$, i.e., the well-prepared initial data case, we have
\begin{equation}\label{xi:well}
\begin{split}
&\|\xi^{n,\pm}(\cdot)\|_{L^2}\lesssim \tau h^{m}+\tau^3,\qquad
\|\nabla\xi^{n,\pm}(\cdot)\|_{L^2}\lesssim \tau h^{m-1}+\tau^3,
\quad m=\min\{m_0,k\},\quad n\ge0;
\end{split}\end{equation}
for $\alpha\in[0,2)$, i.e. the ill-prepared initial data case,
we have
\begin{equation}\label{xi:ill}
\begin{split}
&\|\xi^{n,\pm}(\cdot)\|_{L^2}\lesssim \tau h^{m}+\frac{\tau^3}
{\vep^{2-\alpha}},\quad \|\nabla\xi^{n,\pm}(\cdot)\|_{L^2}\lesssim \tau h^{m-1}+\frac{\tau^3}{\vep^{2-\alpha}},\quad m=\min\{m_0,k\},\quad n\ge0.
\end{split}
\end{equation}
In addition, the coefficients $p_l$, $q_l$, $p_l^{\ast}$
and $q_l^{\ast}$ ($n\ge0$) given in (\ref{coef1}) can be split as
\begin{equation*}\begin{split}
&p_l=e^{i\tau\beta_l^+}p_l^{+}-e^{i\tau\beta_l^-}p_l^{-},\quad
q_l=e^{i\tau\beta_l^+}q_l^{+}-e^{i\tau\beta_l^-}q_l^{-},\quad
p_l^{\ast}=e^{\frac{i\tau}{\vep^2}}(p_l^{+}-p_l^{-}),\quad
q_l^{\ast}=e^{\frac{i\tau}{\vep^2}}(q_l^{+}-q_l^{-}),\\
& p_l^{+}=\frac{-i\tau}{\vep^2\bd_l}\overline{\sigma_l^+(\tau)},
 \quad p_l^{-}=\frac{-i\tau}{\vep^2\bd_l}\overline{\sigma_l^-(\tau)},\quad
q_l^{+}=\frac{\tau}{\vep^2\bd_l}\cdot\frac{1-\sigma_l^+(\tau)}
{e^{i\tau\beta_l^+}\beta^+_l},
\quad
q_l^{-}=\frac{\tau}{\vep^2\bd_l}\cdot\frac{1-\sigma_l^-(\tau)}
{e^{i\tau\beta_l^-}\beta^-_l},
\end{split}
\end{equation*}
where $|p_l^{\pm}|\lesssim \tau$, $|q_l^\pm|\lesssim \tau^2$.
\end{lemma}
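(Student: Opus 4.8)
The strategy is to unravel the definition of the local truncation error $\xi^n$ in \eqref{xidef} by exploiting the fact that every quantity appearing there is either an exact quantity (the sine coefficients $\widehat{\psi(t_n)}_l$) or an approximation of an integral in the exact variation-of-constants formula \eqref{eq:sol}. The key observation is that $\psi(\cdot,t_n)$, being the exact solution of \eqref{eq:nls_wave}, satisfies \eqref{eq:n=1} and \eqref{eq:n>1} \emph{exactly} with $f_l^n(s)=\bigl(\widehat{\Tf(\psi)}\bigr)_l(t_n+s)$. Subtracting the EWI-SP scheme's approximating formulas \eqref{scheme1:2} (with exact data plugged in) from these exact identities, $\xi^n$ reduces to a sum of two error contributions: (i) the difference between the true sine coefficient $\bigl(\widehat{\Tf(\psi(t_n))}\bigr)_l$ and the interpolated one $\bigl(\widetilde{\Tf(\psi(t_n))}\bigr)_l$ — the \emph{interpolation error} — and (ii) the quadrature error in approximating $\int_0^\tau f_l^n(s)\kappa_l(\tau-s)\,ds$ and $\int_0^\tau f_l^n(s)\kappa_l(-s)\,ds$ by the Taylor/finite-difference rules in \eqref{eq:approxn=1}. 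I would treat these two sources separately and then recombine.

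\textbf{Step 1 (splitting off the $\beta_l^\pm$ phases).} First I would establish the algebraic decompositions of $p_l,q_l,p_l^\ast,q_l^\ast$ stated at the end of the lemma. These follow directly from the definitions in \eqref{coef1} together with $\sigma_l^\pm(\tau)=e^{i\tau\beta_l^\pm/2}\sinc(\tau\beta_l^\pm/2)$, which gives $1-\sigma_l^\pm(\tau)=\ldots$ and the relation $\kappa_l(\tau-s)=e^{i(\tau-s)\beta_l^+}-e^{i(\tau-s)\beta_l^-}$. The bounds $|p_l^\pm|\lesssim\tau$, $|q_l^\pm|\lesssim\tau^2$ then come from $|\sinc|\le 1$, $|\beta_l^+|\sim\vep^{-2}$, $\bd_l\sim\vep^{-2}$, and $|1-\sigma_l^-(\tau)|\lesssim\tau^2|\beta_l^-|^2$ (Taylor expansion of $\sinc$, using that $\tau\beta_l^-$ stays bounded since $\beta_l^-=O(1)$ — here uniformity in $\vep$ for the ``$-$'' branch is automatic). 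Once these splittings are in hand, grouping the $e^{i\tau\beta_l^+}$-terms and the $e^{i\tau\beta_l^-}$-terms in \eqref{xidef}, and noting $c_l=-e^{i\tau/\vep^2}$, $d_l=2e^{i\tau/2\vep^2}\cos(\tau\bd_l/2)=e^{i\tau\beta_l^+}+e^{i\tau\beta_l^-}$ (since $\beta_l^++\beta_l^-=\vep^{-2}$), and $c_l^0=\bigl(\beta_l^+e^{i\tau\beta_l^-}-\beta_l^-e^{i\tau\beta_l^+}\bigr)/\bd_l$, naturally produces the claimed form \eqref{deco} with explicit formulas for $\widetilde{\xi^{n,\pm}}_l$.

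\textbf{Step 2 (bounding $\xi^{n,\pm}$).} For each $\xi^{n,\pm}$ I expect a further split into an interpolation part and a quadrature part. The interpolation part is controlled by Lemma~\ref{lem:reg} and Lemma~\ref{lem:sine}/\eqref{eq:interr}: since $\psi(t_n)\in H_s^m$ (with $m=\min\{m_0,k\}$, by assumptions (A)–(B) and Lemma~\ref{lem:reg}), $\Tf(\psi(t_n))\in H_s^m$ with norm $\lesssim 1$, so $\|I_M\Tf(\psi(t_n))-P_M\Tf(\psi(t_n))\|_{L^2}\lesssim h^m$ and the corresponding gradient bound $\lesssim h^{m-1}$; multiplying by the prefactors $|p_l^\pm|\lesssim\tau$, $|q_l^\pm|\lesssim\tau^2$ yields the $\tau h^m$ (resp. $\tau h^{m-1}$) contributions. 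The quadrature part is the Taylor remainder: $\int_0^\tau\!\bigl(f_l^n(s)-f_l^n(0)-s\,\p_t f_l^n(0)\bigr)\kappa_l(\tau-s)\,ds$, whose integrand is $O(s^2\sup|\p_{ss}f_l^n|)$, plus the error from replacing $\p_t f_l^n(0)$ by $\delta_t^- f_l^n(0)$ for $n\ge1$, which is also $O(\tau\sup|\p_{ss}f_l^n|)$ after integration. Summing in $l$ (Parseval) converts $\sup_l$ into an $H^1$-norm: the relevant quantity is $\|\p_{tt}\Tf(\psi)\|_{L^\infty([0,T];H^1)}$, which by the chain rule and assumption (B) is $\lesssim\|\p_{tt}\psi\|_{H^1}+\|\p_t\psi\|_{H^1}^2+\ldots\lesssim 1+\vep^{-(2-\alpha^\ast)}$. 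Since the quadrature terms carry a factor $\tau^2$ from the Taylor remainder plus $\tau/(\vep^2\bd_l)\cdot\tau\sim\tau^2$ from the kernel prefactor in $p_l,q_l$ — actually an extra $\tau$, giving $\tau^3$ — we arrive at $\tau^3$ for $\alpha\ge2$ (where $\alpha^\ast=2$, so $\vep^{-(2-\alpha^\ast)}=1$) and $\tau^3/\vep^{2-\alpha}$ for $\alpha<2$ (where $\alpha^\ast=\alpha$). The $n=0$ case is cleaner since $\p_t f_l^0(0)$ is computed exactly via the initial data.

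\textbf{Main obstacle.} The delicate point is the bookkeeping that keeps all constants \emph{uniform in $\vep$}. The prefactors $p_l,q_l$ involve $1/(\vep^2\bd_l)=O(1)$ and $1/\beta_l^+=O(\vep^2)$, both harmless, but $1/\beta_l^-$ is \emph{not} small — it is $O(|\mu_l|^{-2})$, so dividing by it in $q_l^-$ must be paired with the matching smallness $|1-\sigma_l^-(\tau)|\lesssim(\tau\beta_l^-)^2\lesssim\tau^2|\mu_l|^4\vep^{?}$... — one has to check the cancellation carefully so that $|q_l^-|\lesssim\tau^2$ with a $\vep$-independent constant, and similarly that the summation $\sum_l|\mu_l|^{2}|\widetilde{(\cdot)}_l|^2$ against $\p_{tt}\Tf(\psi)$ genuinely reproduces the semi-$H^1$ norm and not a higher one. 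A second subtlety is verifying $\Tf(\psi(t_n))\in H_s^m$ rather than merely $H^m$ — this is exactly the content of Lemma~\ref{lem:reg} (odd functions map to odd functions), so the boundary-compatibility conditions needed for the spectral interpolation estimates do hold. Once these two checks are done the rest is routine Taylor expansion and Parseval summation.
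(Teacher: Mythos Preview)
Your plan is correct and follows essentially the same route as the paper: both arguments recognize that the exact solution satisfies \eqref{eq:n=1}--\eqref{eq:n>1} identically, split $\kappa_l(\tau-s)=e^{i(\tau-s)\beta_l^+}-e^{i(\tau-s)\beta_l^-}$ to obtain the $\pm$ decomposition, and then break each $\xi^{n,\pm}$ into an interpolation part (controlled via Lemma~\ref{lem:reg} and \eqref{eq:interr}) and a quadrature part (controlled by writing the Taylor remainder as a triple integral of $\partial_{tt}f_l^n$, applying Cauchy--Schwarz, and summing in $l$ via Bessel/Parseval to recover $\|\partial_{tt}\Tf(\psi)\|_{H^1}\lesssim\vep^{\alpha^\ast-2}$).

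One small correction to your Step~1: the claim that ``$\tau\beta_l^-$ stays bounded since $\beta_l^-=O(1)$'' is only true uniformly in $\vep$, \emph{not} uniformly in $l$; indeed $|\beta_l^-|\sim|\mu_l|^2$ can be as large as $h^{-2}$. The bound $|q_l^-|\lesssim\tau^2$ still holds, but you need a two-regime argument: when $|\tau\beta_l^-|\le 1$ the Taylor expansion gives $|1-\sigma_l^-(\tau)|\lesssim|\tau\beta_l^-|$ (first order, not second as you wrote), so $|q_l^-|\le\tau\cdot|\tau\beta_l^-|/|\beta_l^-|=\tau^2$; when $|\tau\beta_l^-|>1$ one uses the crude bound $|1-\sigma_l^-(\tau)|\le 2$ together with $1/|\beta_l^-|<\tau$. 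You partially flag this in your ``main obstacle'' paragraph, so this is just a matter of cleaning up the heuristic.
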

\begin{proof}
Multiplying both sides of  NLSW (\ref{eq:nls_wave})
by $\Phi_l(x)=\sin(\mu_l(x-a))$ and integrating over $\Omega$,
we easily recover the equations for $(\widehat{\psi})_l(t)$,
which are exactly the same as (\ref{eq:psim1}) with $\psi_M$ being
replaced by $\psi(x,t)$.  Replacing $\psi_M$ with $\psi(x,t)$, we
use the same notations $f_l^n(s)$  as in (\ref{eq:fl}), and it
is worth  noticing that the time derivatives of  $f_l^n(s)$,
enjoy the same properties  of time derivatives for $\Tf(\psi(x,t))$.
Thus, the same representations (\ref{eq:n=1}) and (\ref{eq:n>1})
hold for $(\widehat{\psi(t_n)})_l$ with $n=1$ and $n\ge2$, respectively.

Now it is clear that the error $\xi^n(x)$   comes from
the approximations for the integrals (\ref{scheme1:2})
and trigonometric interpolation (\ref{scheme2:2}).
First, we can write the integrals in (\ref{eq:n=1})
and (\ref{eq:n>1}) as
\be
\frac{i}{\vep^2\bd_l}\int_0^\tau f_l^n(s)\kappa_l(\tau-s)\,ds=
e^{i\tau\beta_l^+}\frac{i}{\vep^2\bd_l}\int_0^\tau f_l^n(s)
e^{-i\beta_l^+s}\,ds-e^{i\tau\beta_l^-}\frac{i}{\vep^2\bd_l}
\int_0^\tau f_l^n(s)e^{-i\beta_l^-s}\,ds,
\ee
 then $\xi^n(x)$ ($n\ge0$) can be written as (\ref{deco}) with
\be\label{eq:xideco}
\xi^{n,+}(x)=\eta^{n,+}(x)+\zeta^{n,+}(x),\quad \xi^{n,-}(x)
=\eta^{n,-}(x)+\zeta^{n,-}(x),
\ee
where the interpolation error $\zeta^{n,+}(x),\,\zeta^{n,-}(x)\in X_M$ ($n\ge0$)
are determined by their sine transform coefficients for
$l\in \Tt_M$ given by
\begin{align}\label{zetadef}
(\widehat{\zeta^{0,\pm}})_l=&p_l^{\pm}[(\widetilde{\Tf(\psi_0)})_l
-(\widehat{\Tf(\psi_0)})_l]
+q_l^{\pm}[(\widetilde{G(\psi_0)\psi_1^\vep})_l
+(\widetilde{H(\psi_0)\overline{\psi_1^\vep}})_l
-(\widehat{G(\psi_0)\psi_1^\vep})_l-
(\widehat{H(\psi_0)\overline{\psi_1^\vep}})_l],\\
(\widehat{\zeta^{n,\pm}})_l=&p_l^{\pm}[(\widetilde{\Tf(\psi(t_n))})_l
-(\widehat{\Tf(\psi(t_n))})_l]
+q_l^{\pm}[(\widetilde{\delta_t^-\Tf(\psi(t_n))})_l-(\widehat{\delta_t^-\Tf(\psi(t_n))}
)_l],\quad n\ge1,\nn
\end{align}
with coefficients $p_l^\pm$ and $q_l^\pm$  given in the lemma,
and the integral approximation error $\eta^{n,+}(x),\,\eta^{n,-}(x)\in X_M$ ($n\ge0$)
are given by their sine transform coefficients as, for $l\in\Tt_M$,
\be\label{eq:etadef}
\begin{split}
(\widehat{\eta^{0,\pm}})_l=&-\frac{i}{\vep^2\bd_l}\int_0^\tau
\int_0^s\int_0^{s_1}\left(\p_{tt}f_l^0(s_2)\right)e^{-is\beta_l^\pm}ds_2ds_1ds,\quad
\text{and for}\; n\ge1,\\
(\widehat{\eta^{n,\pm}})_l=&-\frac{i}{\vep^2\bd_l}\int_0^\tau
\left(\int_0^s\int_0^{s_1}\p_{tt}f_l^n(s_2)ds_2ds_1-s\tau\int_0^1
\int_\theta^1\p_{tt}f_l^{n-1}(\theta_1\tau)
\,d\theta_1d\theta\right)e^{-is\beta_l^\pm}ds.
\end{split}
\ee
Now, to obtain estimates  (\ref{xi:well}) and (\ref{xi:ill}), we only need to estimate
$\zeta^{n,\pm}$ and $\eta^{n,\pm}$.

First, let us consider $\zeta^{n,\pm}$. For $n=0$,
recalling assumptions (A) and (B), it is easy to verify
that  $\psi_0$,  $\psi(\tau)$
belong to $H_s^{m_0}$ and $\psi_1^\vep\in H_s^{m}$ ($m=\min\{m_0,k\}$),
while $\Tf(\psi_0)$ belongs to
$H_s^{\min\{m_0,k\}}$, $G(\psi_0)\psi_1^\vep$ and
$H(\psi_0)\overline{\psi_1^\vep}$
belong to $H_s^{m^\ast}$ ($m^\ast=\min\{m,k-1\}\ge2$).
Hence, noticing (\ref{eq:interr}) and Lemma \ref{lem:l2}, we know
\be\label{eq:zeta:n=0}
\|\zeta^{0,\pm}(\cdot)\|_{L^2}\lesssim \tau(h^m+\tau h^{m^\ast})\lesssim
 \tau(h^m+\tau^2),\quad
\|\nabla\zeta^{0,\pm}(\cdot)\|_{L^2}\lesssim \tau(h^{m-1}+\tau^2).
\ee
Similarly, we can get for $n\ge1$ that
\be\label{eq:zeta:n>0}
\|\zeta^{n,\pm}(\cdot)\|_{L^2}\lesssim \tau h^{m},\quad
\|\nabla\zeta^{n,\pm}(\cdot)\|_{L^2}\lesssim \tau h^{m-1}.
\ee
Then we estimate $\eta^{n,\pm}$.
Noticing $f\in C^3$ and assumptions (A) and (B), in view of the
particular assumption $\p_{tt}\psi\sim \vep^{\alpha^\ast-2}$,
we can deduce that for $t\in[0,T]$,
\begin{equation}\label{eq:bound:le}\begin{split}
&\|\p_{tt}\Tf(\psi(t))\|_{L^2}\lesssim \vep^{\alpha^\ast-2},\quad \|\nabla[\p_{tt}\Tf(\psi(t))]\|_{L^2}\lesssim \vep^{\alpha^\ast-2}.
\end{split}
\end{equation}
The key point is the behavior of $\p_{tt}\psi$.
Substituting all the above results into (\ref{eq:etadef}),
we can easily get
\begin{equation*}\begin{split}
|(\widehat{\eta^0})_l|\lesssim  \int_0^\tau\int_0^{s}\int_0^{s_1}\bigg(|(\widehat{\p_{tt}
\Tf(\psi(s_2))})_l|\bigg)\,ds_2ds_1ds,\qquad l\in\Tt_M,
\end{split}\end{equation*}
and by applying Cauchy inequalities and Bessel inequalities
as well as (\ref{eq:bound:le}),  we  obtain
 \begin{equation*}
 \begin{split}
 \|\eta^{0,\pm}(\cdot)\|_{L^2}^2=&\frac{b-a}{2}\sum\limits_{l=1}^{M-1}|(\widehat{\eta^0})_l|^2
\lesssim\frac{(b-a)\tau^3}{2}\int_0^\tau\int_0^{s}\int_0^{s_1}\left(
\sum\limits_{l=1}^{M-1}|(\widehat{\p_{tt}\Tf(\psi(s_2))})_l|^2\right)\,ds_2ds_1ds\\
\lesssim & \tau^3\int_0^\tau\int_0^{s}\int_0^{s_1}\bigg(\|\p_{tt}\Tf(\psi(s_2))\|_{L^2}^2
\bigg)\,ds_2ds_1ds\lesssim \tau^6\vep^{2\alpha^\ast-4}.
 \end{split}
 \end{equation*}
In the same spirit, we can obtain
 \begin{equation*}
 \begin{split}
 \|\nabla\eta^{0,\pm}(\cdot)\|_{L^2}^2=&\frac{b-a}{2}
 \sum\limits_{l=1}^{M-1}|\mu_l|^2|(\widehat{\eta^0})_l|^2\lesssim
  \tau^3\int_0^\tau\int_0^{s}\int_0^{s_1}\bigg(\|\nabla\p_{tt}\Tf(\psi(s_2))\|_{L^2}^2
\bigg)\,ds_2ds_1ds\lesssim \tau^6\vep^{2\alpha^\ast-4}.
 \end{split}
 \end{equation*}
 Thus, we have proved that $\|\eta^{0,\pm}(\cdot)\|_{L^2}+\|\nabla\eta^{0,\pm}(\cdot)\|_{L^2}\lesssim \tau^3\vep^{2-\alpha^\ast}$. Using the same approach (omitted here for brevity),
 we can deduce that
 \be
 \|\eta^{n,\pm}(\cdot)\|_{L^2}+\|\nabla\eta^{n,\pm}(\cdot)\|_{L^2}\lesssim \tau^3\vep^{\alpha^\ast-2},\qquad \text{for}\quad n\ge1.
 \ee
 Now, we are ready to prove the lemma. By combining
 all the results above and triangle inequality, we finally prove (\ref{xi:well}) and (\ref{xi:ill})
 if we separate the cases $\alpha\ge2$ and $\alpha\in[0,2)$.
\end{proof}

Now, let us look back at our main results.
Making use of explicit property of the proposed scheme,
we know that if the error estimates are correct, we can
replace the nonlinearity $\Tf(z)=f(|z|^2)z$ by a cut-off
nonlinearity $\Tf_{_B}(z)$ defined as
\begin{equation}\label{prop}
\Tf_{_B}(z)=\rho\left(\frac{|z|^2}{B}\right) f(|z|^2)z,\quad \text{where}\quad \rho(s)\in C^\infty_0(\Bbb R) \quad\text{satisfying}\quad\rho(s)=\begin{cases}1,&|s|\leq 1,\\
0,&|s|\ge2,\\
\in[0,1],&|s|\in[1,2].
\end{cases}
\end{equation}
where $B=(M_1+1)^2$ with $M_1$ given in (\ref{M1}).
It is true that this replacement doesn't affect NLSW
and NLS themselves. Thus, if the same error estimates
in Theorem \ref{thm:main} and \ref{thm:main2} hold for
NLSW with the truncated nonlinearity $\Tf_{_B}(z)$, it
is a direct consequence that the  error estimates
(Theorem \ref{thm:main} and \ref{thm:main2}) hold true
for NLSW with nonlinearity $\Tf(z)$ as  the corresponding
numerical schemes coincide in this case (see more discussions
in \cite{Bao0,Baoc}).

Based on the above observation, we only need to prove
Theorems \ref{thm:main} and \ref{thm:main2} for the
nonlinearity $\Tf(z)$ replaced by $\Tf_{_B}(z)$. From now
on, we will treat $\Tf(z)$ as $\Tf_{_B}(z)$ while not changing
the notation.

\bigskip

{\it Proof of  Theorem \ref{thm:main}.} Let us define the error
$e^n\in Y_M$ and $e^n(x)\in X_M$ ($n\ge0$) as
\begin{equation}\begin{split}
&e_j^n=(P_M\psi(t_n))(x_j)-\psi_j^n,\quad j\in\Tt_M^0,\qquad n\ge0\\
&e^n(x)=I_M(e^n)(x)=\sum\limits_{l=1}^{M-1}(\widetilde{e^n})_l\Phi_l(x),\quad n\ge0,\quad x\in \Omega.
\end{split}
\end{equation}
Then, by triangle inequality and Lemma \ref{lem:sine} as
well as assumptions (A) and (B), we obtain
\begin{equation}\label{eq:l2h1}\begin{split}
&\|\psi(\cdot,t_n)-\psi_I^n(\cdot)\|_{L^2}\leq\|\psi(\cdot,t_n)-P_M(\psi(t_n))(\cdot)\|_{L^2}+\|e^n(\cdot)\|_{L^2}
\lesssim h^{m_0}+ \|e^n(\cdot)\|_{L^2},\\
&\|\nabla[\psi(\cdot,t_n)-\psi_I^n(\cdot)]\|_{L^2}\lesssim h^{m_0-1}+\|\nabla e^n(\cdot)\|_{L^2},\quad n\ge0.
\end{split}
\end{equation}
Applying inverse inequality,  and Lemma \ref{lem:sine}
further, we have
\begin{equation}\label{eq:linfinity}
\begin{split}
\|\psi^n\|_{l^\infty}\leq &\sup\limits_{j\in\Tt_M^0}|\psi(x_j,t_n)-\psi_j^n| +\|\psi(t_n)\|_{L^\infty}
\leq \sup\limits_{j\in\Tt_M^0}|e^n_j+\psi(x_j,t_n)-P_M(\psi(t_n))(x_j)|+M_1\\
\leq &M_1+\|e^n\|_{l^\infty}+\sup\limits_{j\in\Tt_M^0}|I_M(\psi(t_n))(x_j)-P_M(\psi(t_n))(x_j)|
\\
\leq& M_1+\|e^n\|_{l^\infty}+C_1h^{-d/2}\|I_M(\psi(t_n))(\cdot)-P_M(\psi(t_n))(\cdot)\|_{L^2}\\
\leq& M_1+C_2h^{m_0-d/2}+\|e^n\|_{l^\infty}\leq M_1+\frac12+\|e^n\|_{l^\infty},\qquad n\ge0,
\end{split}
\end{equation}
where $C_1$ and $C_2$ depend on $\|\psi\|_{L^\infty([0,T];H_s^{m_0})}$
and $0<h<h_1$ for some $h_1>0$, $d$ is the dimension of the
spatial space, i.e., $d=1$ in the current case. However, we
put $d$ here to indicate how the proof  works  for  two and
three dimensions ($d=2,3$)  (see also Remark \ref{rmk:ext}).
As a consequence of the discrete Sobolev inequality in 1D,
\be
\|e^n\|_{l^\infty}^2\lesssim \|e^n\|_{l^2}\|\delta_x^+e^n\|_{l^2}\lesssim \|e^n(\cdot)\|_{L^2}\|\nabla e^n(\cdot)\|_{L^2},
\ee
we only need to estimate the $L^2$ and semi-$H^1$ norms of $e^n(x)$ ($n\ge0$).

{\it For $e^0$ and $e^1$.}
Considering $e^0(x)$, we note
\be
e^0(x)=P_M(\psi_0)(x)-I_M(\psi_0)(x),
\ee
in view of Lemma \ref{lem:sine} and (\ref{eq:interr}), we
get  $\|e^0(\cdot)\|_{L^2}\lesssim h^{m_0}\lesssim h^m$ and $\|\nabla e^0(\cdot)\|_{L^2}\lesssim h^{m_0-1}
\lesssim h^{m-1}$, where $m=\min\{m_0,k\}$.
For $e^1(x)$, we have $\psi_1^\vep\in H^m_{s}$ and
\be
 (\widetilde{e^1})_l=-(\widetilde{\xi^0})_l+c_l^0\left((\widehat{\psi_0})_l-(\widetilde{\psi_0})_l\right)
 +d_l^0\left((\widehat{\psi_1^\vep})_l-(\widetilde{\psi_1^\vep})_l\right).
\ee
Then Lemma \ref{lem:localerr} implies $\|e^1(\cdot)\|_{L^2}\lesssim h^{m}+\tau^3$
and $\|\nabla e^1(\cdot)\|_{L^2}\lesssim h^{m-1}+\tau^3$. It is
obvious $\|\psi^0\|_{l^\infty}\leq \|\psi_0\|_{L^\infty}\leq M_1+1$, and
by discrete Sobolev inequality, we obtain
\be
\|e^1\|_{l^\infty}^2\lesssim \|e^1(\cdot)\|_{L^2}\|\nabla e^1(\cdot)\|_{L^2}\lesssim h^{2m-2}+\tau^6,
\ee
which implies there exist $h_2,\tau_1>0$ such that
$\|e^1_j\|_{l^\infty}\leq \frac12$, and $\|\psi^1\|_{l^\infty}\leq M_1+1$ for $0<\tau\leq \tau_1$,
$0<h\leq h_2$ in view of (\ref{eq:linfinity}). This  proves
the conclusion for $n=0,1$ in Theorem \ref{thm:main} by noticing (\ref{eq:l2h1}).
Before going to the next step, we note the decomposition as
\be\label{eq:decom1}
(\widetilde{e^0})_l=(\widetilde{e^{0,+}})_l+(\widetilde{e^{0,-}})_l,\quad (\widetilde{e^1})_l=(\widetilde{e^{0,+}})_le^{i\tau\beta_l^+}+(\widetilde{e^{0,-}})_le^{i\tau\beta_l^-}-
(\widetilde{\xi^0})_l,
\quad l\in \Tt_M,
\ee
where $e^{0,\pm}(x)=\sum\limits_{l=1}^{M-1}(\widetilde{e^{0,\pm}})_l\Phi_l(x)\in X_M$
are given by
\begin{equation*}
(\widetilde{e^{0,+}})_l=-\frac{\beta_l^-}{\bd_l}[(\widehat{\psi_0})_l-(\widetilde{\psi_0})_l]-
\frac{i}{\bd_l}[(\widehat{\psi_1^\vep})_l
-(\widetilde{\psi_1^\vep})_l],\quad(\widetilde{e^{0,-}})_l
=\frac{\beta_l^+}{\bd_l}[(\widehat{\psi_0})_l-(\widetilde{\psi_0})_l]
+\frac{i}{\bd_l}[(\widehat{\psi_1^\vep})_l
-(\widetilde{\psi_1^\vep})_l].
\end{equation*}
We can easily derive that
\be
\|e^{0,\pm}(\cdot)\|_{L^2}\lesssim h^{m},\quad\text{and}\quad \|\nabla e^{0,\pm}(\cdot)\|_{L^2}\lesssim h^{m-1}.
\ee

\bigskip

{\it Error equation for $e^n(x)$ ($n\ge2$).}
For other time steps, subtracting (\ref{xidef})
from (\ref{scheme2:2}) and noticing
$e^{i\frac{\tau}{\vep^2}}=e^{i\tau\beta_l^+}\cdot e^{i\tau\beta_l^-}$ and $2e^{\frac{i\tau}{2\vep^2}}\cos(\frac{\tau\bd_l}{2})=e^{i\tau\beta_l^+}+e^{i\tau\beta_l^-}$,
we obtain
\begin{equation}\label{eq:errorl}\begin{split}
(\widetilde{e^{n+1}})_l=-e^{i\tau(\beta_l^++\beta_l^-)}(\widetilde{e^{n-1}})_l
+(e^{i\tau\beta_l^+}+e^{i\tau\beta_l^-})(\widetilde{e^{n}})_l-(\widetilde{\xi^{n}})_l+\widetilde{W}^n_l, \qquad n\ge1,\quad l\in\Tt_M,
\end{split}
\end{equation}
where $\widetilde{W}^n_l$ can be written as follows (similar as Lemma \ref{lem:localerr}),
\begin{equation}\label{eq:wnl}\begin{split}
\widetilde{W}^n_l=&e^{i\tau\beta_l^+}\widetilde{W}^{n,+}_l-e^{i\tau\beta_l^-}\widetilde{W}^{n,-}_l
-e^{\frac{i\tau}{\vep^2}}(\widetilde{W}^{n-1,+}_l-\widetilde{W}_l^{n-1,-}),\quad n\ge1,\quad\text{with}\quad\widetilde{W}_l^{0,\pm}=0,\\
\widetilde{W}^{n,\pm}_l=&p_l^{\pm}\left((\widetilde{\Tf(\psi(t_n))})_l-(\widetilde{\Tf(\psi^n)})_l\right)
+q_l^{\pm}\left(\delta_t^-(\widetilde{\Tf(\psi(t_n))})_l-\delta_t^-(\widetilde{\Tf(\psi^n)})_l\right),\quad n\ge1,\,l\in\Tt_M.
\end{split}
\end{equation}
{\it Property of $\widetilde{W}_l^{n,\pm}$.}
Next, we claim that if $\|\psi^n\|_{l^\infty}\leq M_1+1$, under assumptions (A) and (B), $f\in C^3([0,\infty))$ and $\Tf(z)$ enjoys the properties of $\Tf_{_B}(z)$ (\ref{prop}),  then we have (see detailed proof in Appendix)
\begin{equation}\label{nonlp}\begin{split}
&\frac{b-a}{2}\sum\limits_{l=1}^{M-1}|\widetilde{W}_l^{n,\pm}|^2\lesssim \tau^2\sum\limits_{k=n-1}^n\|e^k(\cdot)\|_{L^2}^2+\tau^2h^{2m_0},\\
&\frac{b-a}{2}\sum\limits_{l=1}^{M-1}\mu_l^2|\widetilde{W}^{n,\pm}|^2\lesssim \tau^2\sum\limits_{k=n-1}^n\left(\|e^k(\cdot)\|^2_{L^2}+\|\nabla[e^k(\cdot)]\|_{L^2}^2\right)+\tau^2h^{2m_0-2},\qquad n\ge1.
\end{split}
\end{equation}

{\it Proof by energy method.}  Using (\ref{eq:errorl}) iteratively, noticing Lemma \ref{lem:localerr} and above decomposition (\ref{eq:wnl}), we can find that for $n\ge1$,
\begin{align}\label{eq:induc1}
(\widetilde{e^{n+1}})_l=&(\widetilde{e^{0,+}})_le^{i(n+1)\tau\beta_l^+}+(\widetilde{e^{0,-}})_l
e^{i(n+1)\tau\beta_l^-}
-\sum\limits_{k=0}^{n}\left((\widetilde{\xi^{k,+}})_le^{i(n+1-k)\tau\beta_l^+}-
(\widetilde{\xi^{k,-}})_le^{i(n+1-k)\tau\beta_l^-}\right)\nn\\
&+\sum\limits_{k=1}^n\left(\widetilde{W}^{k,+}_le^{i(n+1-k)\tau\beta_l^+}-
\widetilde{W}^{k,-}_le^{i(n+1-k)\tau\beta_l^-}\right),\qquad l\in\Tt_M.
\end{align}
Using Cauchy inequality, we have
\begin{align}\label{eq:linf}
|(\widetilde{e^{n+1}})_l|^2
\leq &6 \biggl(|(\widetilde{e^{0,+}})_l|^2+|(\widetilde{e^{0,-}})_l|^2+
(n+1)\sum\limits_{k=0}^n|(\widetilde{\xi^{k,+}})_l|^2+
(n+1)\sum\limits_{k=0}^n|(\widetilde{\xi^{k,-}})_l|^2\nn\\
&+n\sum\limits_{k=1}^n|\widetilde{W}^{k,+}_l|^2+n\sum\limits_{k=1}^n|\widetilde{W}^{k,-}_l|^2
\biggl),\qquad l\in \Tt_M.
\end{align}
Then summing above inequalities together for $l\in\Tt_M$,  making use of Lemma \ref{lem:localerr} and Parseval identity, noticing  claim (\ref{nonlp}),  for all $1\leq n\leq \frac{T}{\tau}-1$, we have $\|e^{n+1}(\cdot)\|_{L^2}=\frac{b-a}{2}\sum\limits_{l=1}^{M-1}|(\widetilde{e^{n+1}})_l|^2$ and
\begin{align*}
\|e^{n+1}(\cdot)\|_{L^2}^2
\leq&\, 6\bigg(\|e^{0,+}(\cdot)\|_{L^2}^2+\|e^{0,-}(\cdot)\|_{L^2}^2
+(n+1)\sum\limits_{k=0}^n(\|\xi^{k,+}(\cdot)\|_{L^2}^2
+\|\xi^{k,-}(\cdot)\|_{L^2}^2)\\&+n C_0\tau^2\sum\limits_{k=0}^n(\|e^k(\cdot)\|_{L^2}^2+h^{2m_0})\bigg)\\
\leq& C_1h^{2m_0}+(n+1)^2\tau^2 C_2(\tau^4+h^{2m})
+C_0T\tau\sum\limits_{k=0}^n\|e^k(\cdot)\|_{L^2}^2\\
\leq& C_3(\tau^4+h^{2m})+C_0T\tau\sum\limits_{k=0}^n\|e^k(\cdot)\|_{L^2}^2,\quad 1\leq n\leq \frac{T}{\tau}-1,
\end{align*}
where $C_0$, $C_1$, $C_2$ and $C_3$ are constants depending on $T$, $f(\cdot)$, $M_1$ and
$\|\psi\|_{L^\infty([0,T];H_s^{m_0})}$.
Thus, the error bounds for $e^0(x)$ and $e^1(x)$ combined with discrete Gronwall inequality \cite{Glassey,Guo,Baoc} would imply that there exists a $\tau_2>0$ independent of $\vep$ such that when $0<\tau\leq\tau_2$, we can get
\be
\|e^{n+1}(\cdot)\|_{L^2}^2\lesssim \tau^4+h^{2m},\quad \text{and}
\quad \|e^{n+1}(\cdot)\|_{L^2}\lesssim \tau^2+h^m,\quad 1\leq n\leq \frac{T}{\tau}-1.
\ee

Then we estimate $\|\nabla e^{n+1}(\cdot)\|_{L^2}$. Similar as the above $L^2$ case, multiplying both sides of (\ref{eq:linf}) by $|\mu_l|^2$ and summing up for all $l\in\Tt_M$, noticing  claim (\ref{nonlp}), for all $1\leq n\leq  \frac{T}{\tau}-1$, using the derived $L^2$ estimates, we know
\begin{align*}
\|\nabla e^{n+1}(\cdot)\|_{L^2}^2
\lesssim& \bigg(\|\nabla e^{0,+}(\cdot)\|_{L^2}^2+\|\nabla e^{0,-}(\cdot)\|_{L^2}^2
+(n+1)\sum\limits_{k=0}^n(\|\nabla \xi^{k,+}(\cdot)\|_{L^2}^2
+\|\nabla\xi^{k,-}(\cdot)\|_{L^2}^2)\\&+n \tau^2\sum\limits_{k=0}^n(\|e^k(\cdot)\|_{L^2}^2+
\|\nabla e^k(\cdot)\|_{L^2}^2+h^{2m_0-2})\bigg)\\
\lesssim &(\tau^4+h^{2m-2})+\tau\sum\limits_{k=0}^n\|\nabla e^k(\cdot)\|_{L^2}^2,\quad 1\leq n\leq \frac{T}{\tau}-1.
\end{align*}
Again, the error bounds for $e^0(x)$ and $e^1(x)$ combined with the discrete Gronwall inequality \cite{Glassey,Guo,Baoc} would imply that there exists a $\tau_3>0$ independent of $\vep$ such that when $0<\tau\leq\tau_3$, we have
\be
\|\nabla e^{n+1}(\cdot)\|_{L^2}^2\lesssim \tau^4+h^{2m-2},\quad\text{and}
\quad \|\nabla e^{n+1}(\cdot)\|_{L^2}\lesssim \tau^2+h^{m-1},\quad 0\leq n\leq \frac{T}{\tau}-1.
\ee
It remains to show the $l^\infty$ bound of $\psi^{n+1}$.  The error bounds $\|e^{n+1}(\cdot)\|_{L^2}$ and $\|\nabla e^{n+1}(\cdot)\|_{L^2}$ and the discrete Sobolev inequality would imply that for some $h_3$, $\tau_4$ independent of $\vep$, if $0<h\leq h_3$, $0<\tau\leq\tau_4$
\be\label{eq:n+1}
\|e^{n+1}\|_{l^\infty}\lesssim \sqrt{\|e^{n+1}(\cdot)\|_{L^2}\|\nabla e^{n+1}(\cdot)\|_{L^2}}\lesssim h^{m-1}+\tau^2,
\quad\text{and}\quad \|\psi^{n+1}\|_{l^\infty}\leq M_1+1,
\ee
where $0\leq n\leq \frac{T}{\tau}-1$.
Now, in view of (\ref{eq:l2h1}) and (\ref{eq:n+1}), we see
\be
\|\psi(\cdot,t_{n})-\psi_I^{n}(\cdot)\|_{L^2}\lesssim h^m+\tau^2,\quad
\|\nabla[\psi(\cdot,t_{n})-\psi_I^{n}(\cdot)]\|_{L^2}\lesssim h^{m-1}+\tau^2,\quad 0\leq n\leq \frac{T}{\tau},
\ee
and this completes the proof, if we choose $0<\tau\leq\tau_0=\min\{\tau_1,\tau_2,\tau_3,\tau_4\}$
and $0<h\leq h_0=\min\{h_1,h_2,h_3\}$.
$\hfill\Box$

\section{Convergence in the ill-prepared initial data case}

Next, we start to prove Theorem \ref{thm:main2}. Again we will treat $\Tf(z)$ as $\Tf_{_B}(z)$ while not changing the notation.
The idea of the proof is shown in the diagram (\ref{chart}), where $\psi_{h,\tau}$ denotes the EWI-SP numerical solution,
$\psi$ is the exaction solution of the NLSW and $\psi^s$ is the exact solution of the corresponding NLS. In fact, the similar idea
has been employed in the study of asymptotic preserving (AP) schemes \cite{Deg,Jin}.

\be\label{chart}
\xymatrixcolsep{6pc}\xymatrix{
\psi_{h,\tau} \ar[r]^{O(h^{m}+\tau^2/\vep^{2-\alpha})} \ar[rd]^{\quad\quad L^2 \text{ error}}_{O(h^m+\tau^2+\vep^2)\quad}&
\psi\ar[d]^{O(\vep^2)} \\
&\psi^s}
\ee

{\it Proof of (\ref{ill-res1}) in Theorem \ref{thm:main2}.} First, let us consider  (\ref{ill-res1}), for which the proof is identical to the above proof for Theorem \ref{thm:main2}. The only thing needed to be modified is the local error bound, where $\|\xi^{n,\pm}(\cdot)\|_{L^2}\lesssim \tau(h^m+\vep^{\alpha-2}\tau^2)$ and  $\|\nabla\xi^{n,\pm}(\cdot) \|_{L^2}\lesssim \tau(h^{m-1}+\vep^{\alpha-2}\tau^2)$ (Lemma \ref{lem:localerr}) in this case. Following the analogous proof of Theorem \ref{thm:main2}, one can easily prove assertion (\ref{ill-res1}) and we omit the details here for brevity. $\hfill\Box$

Now, we come to prove (\ref{ill-res2}), using a similar idea in \cite{Baoc}. The proof is also similar to that of Theorem \ref{thm:main2}, and we just outline the main steps.  Let  $\psi^s:=\psi^s(x,t)$ be the solution of NLS (\ref{eq:nls}), and we write $\psi^s(t_n)$ for $\psi^s(x,t_n)$ in short. Denote the `error' $\Te^n\in Y_M$ and $\Te^n(x)\in X_M$ ($n\ge0$) as
\begin{equation}\begin{split}
&\Te_j^n=(P_M\psi^s)(x_j,t_n)-\psi_j^n,\quad j\in\Tt_M^0,\quad n\ge0,\\
&\Te^n(x)=I_M(\Te^n)(x)=\sum\limits_{l=1}^{M-1}(\widetilde{\Te^n})_l\Phi_l(x),\quad n\ge0,\quad x\in \Omega.
\end{split}
\end{equation}
Using triangle inequality, we have for $n\ge0$,
\begin{align}
\|\psi(\cdot,t_n)-\psi_I^n(\cdot)\|_{L^2}\leq &\,\|\psi(\cdot,t_n)-\psi^s(\cdot,t_n)\|_{L^2}
+\|\psi^s(\cdot,t_n)-P_M(\psi^s(t_n))(\cdot)\|_{L^2}+\|\Te^n(\cdot)\|_{L^2}\nn\\
\lesssim&\, \vep^2+h^{m_0}+\|\Te^n(\cdot)\|_{L^2},\label{eq:main2:1}\\
\|\nabla[\psi(\cdot,t_n)-\psi_I^n(\cdot)]\|_{L^2}\leq &\,\|\nabla[\psi(\cdot,t_n)-\psi^s(\cdot,t_n)]\|_{L^2}
+\|\nabla[\psi^s(\cdot,t_n)-P_M(\psi^s(t_n))(\cdot)]\|_{L^2}+\|\nabla\Te^n(\cdot)\|_{L^2}\nn\\
\lesssim&\, \vep^2+h^{m_0}+\|\nabla\Te^n(\cdot)\|_{L^2},\label{eq:main2:2}
\end{align}
which indicates that we only need to estimate $\|\Te^n(\cdot)\|_{L^2}$ and $\|\nabla\Te^n(\cdot)\|_{L^2}$.

In order to estimate the `error' $\Te^n$, we define the `local truncation error'  $\chi^n(x)=\sum\limits_{l=1}^{M-1}(\widetilde{\chi^n})_l\Phi_l(x)\in X_M$ for $\psi^s(x,t)$ as
\begin{align}\label{chidef}
(\widetilde{\chi^{0}})_l=&c_l^0(\widehat{\psi_0})_l+ d_l^0(\widehat{\psi_1})_l
+p_l^0(\widetilde{\Tf(\psi_0)})_l
+q_l^0(\widetilde{\Td(\psi^s(0))})
-(\widehat{\psi^s(\tau)})_l,\nn\\
(\widetilde{\chi^n})_l=&c_l(\widehat{\psi^s(t_{n-1})})_l+ d_l(\widehat{\psi^s(t_n)})_l
+p_l(\widetilde{\Tf(\psi^s(t_n))})_l
+q_l(\widetilde{\Td(\psi^s(t_n))})_l
-p_l^{\ast}(\widetilde{\Tf(\psi^s(t_{n-1}))})_l\\&
-q_l^{\ast}(\widetilde{\Td(\psi^s(t_{n-1}))})
-(\widehat{\psi^s(t_{n+1})})_l,\quad n\ge1.\nn
\end{align}
Rewriting the NLS (\ref{eq:nls}) as
\be
i\p_t\psi^s-\vep^2\p_{tt}\psi^s+\p_{xx}\psi^s+\left(f(|\psi^s|^2)\psi^s+\vep^2\p_{tt}\psi^s\right)=0,
\ee
and recalling $\psi^s(x,0)=\psi_0$ and $\p_t\psi^s(x,0)=\psi_1(x)$, we can prove analogous results in the same way as Lemma \ref{lem:localerr}. Here we present the results and  omit the details.
\begin{lemma}\label{lem:localerr2} Let $\chi^n(x)$ ($n\ge0$) be defined as (\ref{chidef}), $f(\cdot)\in C^k([0,\infty))$ ($k\ge3$), under assumptions {\rm (A)} and {\rm (B)}, we have the following decomposition of $\chi^n(x)$ ($n\ge0$) (\ref{eq:xideco}),
 \begin{equation}\label{deco2}\begin{split}
 &\chi^0(x)=e^{i\tau\beta_l^+}\chi^{0,+}(x)-e^{i\tau\beta_l^-}\chi^{0,-}(x),\qquad\\
 &\chi^n(x)=e^{i\tau\beta_l^+}\chi^{n,+}(x)-e^{i\tau\beta_l^-}\chi^{n,-}(x)
 -e^{\frac{i\tau}{\vep^2}}\left(\chi^{n-1,+}(x)-\chi^{n-1,-}(x)\right),\quad n\ge1,
 \end{split}
 \end{equation}
 where $\chi^{n,\pm}(x)=\sum\limits_{l=1}^{M-1}(\widetilde{\chi^{n,\pm}})_l\Phi_l(x)\in X_M$  and
 for $\alpha\in[0,2)$, i.e. the ill-prepared initial data case, we have
\begin{equation*}
\|\chi^{n,\pm}(\cdot)\|_{L^2}\lesssim \tau h^{m}+\tau^3+\tau\vep^2,\quad \|\nabla\chi^{n,\pm}(\cdot)\|_{L^2}\lesssim \tau h^{m-1}+\tau^3+\tau\vep^2,\quad m=\min\{m_0,k\},\quad n\ge0.
\end{equation*}
\end{lemma}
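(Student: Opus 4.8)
\emph{Sketch of proof.} The plan is to recycle the proof of Lemma~\ref{lem:localerr} after rewriting the NLS~(\ref{eq:nls}) in NLSW form,
\[
i\p_t\psi^s-\vep^2\p_{tt}\psi^s+\p_{xx}\psi^s+\bigl(\Tf(\psi^s)+\vep^2\p_{tt}\psi^s\bigr)=0 .
\]
With this reformulation the sine coefficients $(\widehat{\psi^s})_l(t)$ satisfy exactly the second-order ODE~(\ref{ODE}) with the forcing $f_l^n(s)$ replaced by $g_l^n(s):=(\widehat{\Tf(\psi^s)})_l(t_n+s)+\vep^2(\widehat{\p_{tt}\psi^s})_l(t_n+s)$, so the exact representations~(\ref{eq:n=1})--(\ref{eq:n>1}) hold verbatim for $(\widehat{\psi^s(t_n)})_l$. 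The EWI-SP scheme reproduces them only up to three errors: (i) the trigonometric interpolation replacing $\widehat{\cdot}$ by $\widetilde{\cdot}$; (ii) the Gautschi-type quadrature of $\frac{i}{\vep^2\bd_l}\int_0^\tau(\widehat{\Tf(\psi^s)})_l(t_n+s)\kappa_l(\tau-s)\,ds$ and its $\kappa_l(-s)$ companion; and (iii) the outright omission of the contribution of the extra term $\vep^2\p_{tt}\psi^s$ to those integrals. Since $\kappa_l(\tau-s)=e^{i\tau\beta_l^+}e^{-is\beta_l^+}-e^{i\tau\beta_l^-}e^{-is\beta_l^-}$, each source splits into a $+$ and a $-$ part just as in Lemma~\ref{lem:localerr}, yielding the decomposition~(\ref{deco2}) with $\chi^{n,\pm}=\zeta^{n,\pm}+\eta^{n,\pm}+\theta^{n,\pm}$, where $\zeta^{n,\pm}$ and $\eta^{n,\pm}$ are the analogues (now built from $\psi^s$ and $\Tf(\psi^s)$) of the interpolation and quadrature errors there, and $\theta^{n,\pm}$ is new.

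The term $\zeta^{n,\pm}$ is handled exactly as in Lemma~\ref{lem:localerr} (cf.~(\ref{zetadef})): under (A), (B) and Lemma~\ref{lem:reg} one has $\psi^s(t_n),\Tf(\psi^s(t_n))\in H_s^m$ with $m=\min\{m_0,k\}$ and $\psi_1\in H_s^m$, so Lemmas~\ref{lem:l2},~\ref{lem:sine} and~(\ref{eq:interr}) give $\|\zeta^{n,\pm}(\cdot)\|_{L^2}\lesssim\tau h^m$, $\|\nabla\zeta^{n,\pm}(\cdot)\|_{L^2}\lesssim\tau h^{m-1}$ for $n\ge0$ (at $n=0$ the $\Td(\psi^s(0))$ term contributes only a lower-order remainder). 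For $\eta^{n,\pm}$ the integral-remainder formula~(\ref{eq:etadef}) carries over with $f_l^n$ built from $\Tf(\psi^s)$; the sole difference from Lemma~\ref{lem:localerr} is the bound on the second time derivative. Here assumption (B) gives $\|\p_t^k\psi^s\|_{L^\infty([0,T];H^1)}\lesssim1$ for $k=1,2$ and $\|\psi^s\|_{L^\infty([0,T];L^\infty\cap H^{m_0})}\lesssim1$, so the chain and product rules together with $f\in C^3$ and the Sobolev embedding $H^1(\Omega)\hookrightarrow L^\infty(\Omega)$ in 1D give $\|\p_{tt}\Tf(\psi^s(t))\|_{H^1}\lesssim1$ uniformly in $\vep$. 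Since $\vep^2\bd_l=\sqrt{1+4\vep^2\mu_l^2}\ge1$ and $|\kappa_l(s)|=O(1)$, the same Cauchy--Schwarz/Bessel estimate as there now produces $\|\eta^{n,\pm}(\cdot)\|_{L^2}+\|\nabla\eta^{n,\pm}(\cdot)\|_{L^2}\lesssim\tau^3$ for all $n\ge0$, with no $\vep^{\alpha^\ast-2}$ blow-up precisely because $\p_{tt}\psi^s$ stays bounded.

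It remains to bound $\theta^{n,\pm}$, whose sine coefficients are, up to an immaterial sign,
\[
(\widehat{\theta^{n,\pm}})_l=\frac{i}{\bd_l}\int_0^\tau(\widehat{\p_{tt}\psi^s})_l(t_n+s)\,e^{-is\beta_l^\pm}\,ds,\qquad n\ge0,
\]
arising from the $\vep^2\p_{tt}\psi^s$ part of $\frac{i}{\vep^2\bd_l}\int_0^\tau g_l^n(s)\kappa_l(\tau-s)\,ds$, which the scheme discards (the recursion~(\ref{deco2}) then accounts for where $\theta^{n,\pm}$ reappears). The key is that the prefactor $1/\bd_l=\vep^2/\sqrt{1+4\vep^2\mu_l^2}$ is bounded by $\vep^2$ uniformly in $l$ and $\vep$, while $\beta_l^\pm\in\Bbb R$ so $|e^{-is\beta_l^\pm}|=1$. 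Hence $|(\widehat{\theta^{n,\pm}})_l|\le\vep^2\int_0^\tau|(\widehat{\p_{tt}\psi^s})_l(t_n+s)|\,ds$, and Cauchy--Schwarz in $s$ followed by Parseval and $\|\p_{tt}\psi^s\|_{L^\infty([0,T];H^1)}\lesssim1$ give $\|\theta^{n,\pm}(\cdot)\|_{L^2}^2\lesssim\vep^4\tau^2$ and, inserting the $\mu_l^2$ weights, $\|\nabla\theta^{n,\pm}(\cdot)\|_{L^2}^2\lesssim\vep^4\tau^2$; that is $\|\theta^{n,\pm}(\cdot)\|_{L^2}+\|\nabla\theta^{n,\pm}(\cdot)\|_{L^2}\lesssim\tau\vep^2$. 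Adding the three bounds by the triangle inequality gives $\|\chi^{n,\pm}(\cdot)\|_{L^2}\lesssim\tau h^m+\tau^3+\tau\vep^2$ and $\|\nabla\chi^{n,\pm}(\cdot)\|_{L^2}\lesssim\tau h^{m-1}+\tau^3+\tau\vep^2$, as claimed.

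The main obstacle --- indeed the only point where genuine care is needed --- is item~(iii): the $\vep^2\p_{tt}\psi^s$ correction is small \emph{not} because $\p_{tt}\psi^s$ is small ($\|\p_{tt}\psi^s\|_{H^1}$ is only $O(1)$), but because the variation-of-constants kernel contributes a factor $\frac{1}{\vep^2\bd_l}\cdot\vep^2=\frac{1}{\bd_l}=O(\vep^2)$. One must keep the $\vep^2$ attached to the kernel and not cancel it against the $\vep^2$ multiplying $\p_{tt}\psi^s$ in the reformulated equation, or the $O(\tau\vep^2)$ gain is lost. With that bookkeeping done, the rest is a transcription of the proof of Lemma~\ref{lem:localerr}, the NLSW bound $\|\p_{tt}\psi\|_{H^1}\lesssim\vep^{\alpha^\ast-2}$ being replaced by the NLS bound $\|\p_{tt}\psi^s\|_{H^1}\lesssim1$.
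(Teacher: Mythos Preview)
Your proof is correct and follows exactly the approach the paper takes: it, too, rewrites the NLS as $i\p_t\psi^s-\vep^2\p_{tt}\psi^s+\p_{xx}\psi^s+\bigl(\Tf(\psi^s)+\vep^2\p_{tt}\psi^s\bigr)=0$, notes $\p_t\psi^s|_{t=0}=\psi_1$, and then declares the argument ``the same way as Lemma~\ref{lem:localerr}'' without further detail. Your explicit identification of the extra piece $\theta^{n,\pm}$ and the observation that $\frac{1}{\vep^2\bd_l}\cdot\vep^2=\frac{1}{\bd_l}\le\vep^2$ is precisely the mechanism producing the $\tau\vep^2$ term in the estimate; this is the only new ingredient beyond Lemma~\ref{lem:localerr}, and you have handled it correctly.
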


The decomposition in Lemma \ref{lem:localerr2} is analogous to that in the proof of Lemma \ref{lem:localerr} and the lemma also holds true for $\alpha\ge2$\,.

\bigskip

{\it Proof of (\ref{ill-res2}) and (\ref{ill-res3}) in Theorem \ref{thm:main2}.}  First, for $\Te^0(x)$ and $\Te^1(x)$ we note that we can write
\be
(\widetilde{\Te^0})_l=(\widetilde{\Te^{0,+}})_l+(\widetilde{\Te^{0,-}})_l,\quad (\widetilde{\Te^1})_l=(\widetilde{\Te^{0,+}})_le^{i\tau\beta_l^+}+(\widetilde{\Te^{0,-}})_le^{i\beta_l^-}-
(\widetilde{\chi^0})_l,
\quad l\in \Tt_M,
\ee
where $\Te^{0,\pm}(x)=\sum\limits_{l=1}^{M-1}(\widetilde{\Te^{0,\pm}})_l\Phi_l(x)\in X_M$ are given by
\begin{equation*}
(\widetilde{\Te^{0,+}})_l=-\frac{\beta_l^-}{\bd_l}[(\widehat{\psi_0})_l-(\widetilde{\psi_0})_l]-
\frac{i}{\bd_l}[(\widehat{\psi_1})_l
-(\widetilde{\psi_1^\vep})_l],\quad(\widetilde{\Te^{0,-}})_l
=\frac{\beta_l^+}{\bd_l}[(\widehat{\psi_0})_l-(\widetilde{\psi_0})_l]+\frac{i}{\bd_l}[(\widehat{\psi_1})_l
-(\widetilde{\psi_1^\vep})_l].
\end{equation*}
In view of the fact that $\frac{1}{\bd_l}\leq \vep^2$ and $\psi_1^\vep=\psi_1+\vep^\alpha\omega^\vep(x)$, it is easy to verify that
\be
\|\Te^{0,\pm}(\cdot)\|_{L^2}\lesssim h^{m}+\vep^{2+\alpha},\quad \|\nabla\Te^{0,\pm}(\cdot)\|_{L^2}\lesssim h^{m-1}+\vep^{2+\alpha},
\ee
which implies (\ref{ill-res2}) for $n=0,1$ in view of Lemma \ref{lem:localerr2}. For time steps $n\ge2$, following the same procedure of the proof for Theorem \ref{thm:main}, we can get conclusion  (\ref{ill-res2}) for all $0\leq n\leq\frac{T}{\tau}$ with the help of Lemma \ref{lem:localerr2}.

Finally, it remains to prove (\ref{ill-res3}) which enables us to simplify the nonlinearity $\Tf$ to $\Tf_{_B}$. Since assertions (\ref{ill-res1}) and (\ref{ill-res2}) have been proven and the constants in the estimates are independent of $\vep$, $\tau$ and $h$, we take the minimum of $\vep^2$ and $\frac{\tau^2}{\vep^{2-\alpha}}$ for $0<\vep\leq1$, and the following holds for $0\leq n\leq\frac{T}{\tau}$,
\begin{equation*}
\|\psi(\cdot,t_n)-\psi_I^n(\cdot)\|_{L^2}\lesssim h^{m}+\tau^{4/(2-\alpha)},\quad\|\nabla[\psi(\cdot,t_n)-\psi_I^n(\cdot)] \|_{L^2}\lesssim h^{m-1}+\tau^{4/(2-\alpha)}.
\end{equation*}
Hence  for $\alpha\in[0,2)$, Sobolev inequality   implies that
\be
\|\psi(\cdot,t_n)-\psi_I^n(\cdot)\|_{L^\infty}\leq \sqrt{\|\psi(\cdot,t_n)-\psi_I^n(\cdot)\|_{L^2}\|\nabla[\psi(\cdot,t_n)-\psi_I^n(\cdot)] \|_{L^2}}
\lesssim h^{m-1}+\tau,
\ee
which justifies $\max_{j\in{\cal T}_{M}}|\psi(x_j,t_n)-\psi_j^n|\leq 1$ for sufficient small $\tau$ and $h$. Hence (\ref{ill-res3}) is proven. The proof of Theorem \ref{thm:main2} is complete.
$\hfill\Box$

\begin{remark}\label{rmk:ext}Let us make a final remark that the above proof and the main results (Theorem \ref{thm:main} and \ref{thm:main2}) are valid for 2D and 3D cases. The key point for extension to 2D and 3D is the discrete Sobolev inequality in higher dimensions as \cite{Thomee}
 \be
 \|\psi_h\|_\infty\leq C|\ln h|\,\|\psi_h\|_{H^1},
 \qquad \|\phi_h\|_\infty\leq Ch^{-1/2}\|\phi_h\|_{H^1},
 \ee
 where $\psi_h$ and $\phi_h$  are 2D and 3D mesh functions with zero at the boundary,
  respectively,  and the discrete semi-$H^1$ norm $\|\cdot\|_{H^1}$ and $l^\infty$ norm
  $\|\cdot\|_\infty$ can be defined similarly  as the 1D version (\ref{norm}).  Thus, by requiring the time step $\tau$ satisfies the additional condition $\tau\lesssim h$,
   with the discrete Sobolev inequality and the uniform bounds for the semi-$H^1$ norm at $h^{m-1}+\tau$, using  $m\ge2$, we can control the $l^\infty$ bound of the numerical solution, which guarantees the correctness of the cutoff (\ref{prop}). The readers can refer to \cite{Baoc} for more discussion.
\end{remark}
\section{Numerical results}
In this section, we report the numerical results of our  scheme EWI-SP (\ref{scheme2:1})-(\ref{scheme2:2}) to confirm our theoretical analysis. The nonlinearity is taken as $f(|z|^2)=-|z|^2$.

\begin{table}[htb]
\begin{center}
\begin{tabular}{ccccc|cccc}\toprule
&\multicolumn{4}{|c|}{$\alpha=0$}&\multicolumn{4}{c}{$\alpha=2$}\\\cline{2-9}
&\multicolumn{1}{|c|}{$h=2$}&\multicolumn{1}{|c|}{$h=1$} & \multicolumn{1}{|c|}{$h=1/2$} & \multicolumn{1}{|c|}{$h=1/4$}
&\multicolumn{1}{|c|}{$h=2$}&\multicolumn{1}{|c|}{$h=1$} & \multicolumn{1}{|c|}{$h=1/2$} & $h=1/4$\\ \hline
$\vep=1/2$  &1.03E00&9.59E-2&6.76E-4&4.15E-8&9.45E-1&8.85E-2&6.39E-4&3.90E-8\\
$\vep=1/2^2$&8.85E-1&6.85E-2&2.36E-4&2.47E-9&8.55E-1&6.96E-2&2.45E-4&2.43E-9\\
$\vep=1/2^3$&8.62E-1&6.71E-2&1.57E-4&1.30E-10&8.55E-1&6.72E-2&1.57E-4&1.39E-10\\
$\vep=1/2^4$&8.57E-1&6.71E-2&1.40E-4&7.00E-11&8.60E-1&6.72E-2&1.40E-4&6.99E-11\\
$\vep=1/2^5$&8.61E-1&6.73E-2&1.37E-4&5.44E-11&8.62E-1&6.72E-2&1.37E-4&5.45E-11\\
$\vep=1/2^6$&8.62E-1&6.73E-2&1.36E-4&5.14E-11&8.62E-1&6.73E-2&1.36E-4&5.15E-11\\
$\vep=1/2^{10}$&8.62E-1&6.73E-2&1.36E-4&5.06E-11&8.62E-1&6.73E-2&1.36E-4&5.06E-11\\
$\vep=1/2^{20}$&8.62E-1&6.73E-2&1.36E-4&5.06E-11&8.62E-1&6.73E-2&1.36E-4&5.06E-11\\
\bottomrule
\end{tabular}
\end{center}
\caption{Spatial error analysis for  EWI-SP (\ref{scheme2:1})-(\ref{scheme2:2}), with different
$\vep$ for Case I ($\alpha=2$) and  Case II ($\alpha=0$),
for  $\|e^n(x)\|_{H^1}$.} \label{tab:spatial}
\end{table}

\begin{table}[htb]
\begin{center}
\begin{tabular}{ccccccccccc}\toprule
$\alpha=2$&$\tau=0.2$ & $\tau=\frac{0.2}{4}$ & $\tau=\frac{0.2}{4^2}$&$\tau=\frac{0.2}{4^3}$&$\tau=\frac{0.2}{4^4}$
&$\tau=\frac{0.2}{4^5}$&$\tau=\frac{0.2}{4^6}$&$\tau=\frac{0.2}{4^7}$\\ \hline
$\vep=1/2$&4.63E-2&2.97E-3&1.87E-4&1.17E-5&7.34E-7&4.59E-8&2.87E-9&1.87E-10\\
rate &--- &1.98&1.99&2.00&2.00&2.00&2.00&1.97\\ \hline
$\vep=1/2^2$&4.22E-2&4.52E-3&2.83E-4&1.77E-5&1.11E-6&6.91E-8&4.32E-9&2.77E-10\\
rate &--- &1.61&2.00&2.00&2.00&2.00&2.00&1.98\\ \hline
$\vep=1/2^3$&5.01E-2&3.99E-3&3.76E-4&2.37E-5&1.48E-6&9.27E-8&5.78E-9&3.53E-10\\
rate &--- &1.83&1.70&2.00&2.00&2.00&2.00&2.02\\ \hline
$\vep=1/2^4$&5.50E-2&3.73E-3&3.09E-4&2.45E-5&1.53E-6&9.61E-8&6.01E-9&3.74E-10\\
rate &--- &1.94&1.80&1.83&2.00&2.00&2.00&2.00\\ \hline
$\vep=1/2^5$&5.65E-2&3.85E-3&2.43E-4&1.95E-5&1.61E-6&1.02E-7&6.38E-9&4.04E-10\\
rate &--- &1.94&1.99&1.82&1.80&1.99&2.00&1.99\\ \hline
$\vep=1/2^6$&5.69E-2&3.88E-3&2.45E-4&1.54E-5&1.25E-6&1.00E-7&6.30E-9&3.82E-10\\
rate &--- &1.94&1.99&2.00&1.81&1.82&1.99&2.02\\ \hline
$\vep=1/2^{10}$&5.70E-2&3.89E-3&2.46E-4&1.54E-5&9.62E-7&6.01E-8&3.76E-9&2.48E-10\\
rate &--- &1.94&1.99&2.00&2.00&2.00&2.00&1.96\\ \hline
$\vep=1/2^{20}$&5.70E-2&3.89E-3&2.46E-4&1.54E-5&9.62E-7&6.01E-8&3.76E-9&2.46E-10\\
rate &--- &1.94&1.99&2.00&2.00&2.00&2.00&1.97\\
\bottomrule
\end{tabular}
\end{center}
\caption{Temporal error analysis for  EWI-SP, with different
$\vep$ for  Case I ($\alpha=2$),
with  $\|e^n(\cdot)\|_{H^1}$. The convergence rate
is calculated as $\log_2(\|e^n(\cdot,4\tau)\|_{H^1}/
\|e^n(\cdot,\tau)\|_{H^1})/2$.} \label{tab:temporal:2}
\end{table}

\begin{table}[htb]
\begin{center}
\begin{tabular}{ccccccccccc}\toprule
$\alpha=0$&$\tau=0.2$ & $\tau=\frac{0.2}{4}$ & $\tau=\frac{0.2}{4^2}$&$\tau=\frac{0.2}{4^3}$&$\tau=\frac{0.2}{4^4}$
&$\tau=\frac{0.2}{4^5}$&$\tau=\frac{0.2}{4^6}$&$\tau=\frac{0.2}{4^7}$\\ \hline
$\vep=1/2$&1.22E-1&7.58E-3&4.75E-4&2.97E-5&1.86E-6&1.16E-7&7.24E-9&4.42E-10\\
rate &--- &2.00&2.00&2.00&2.00&2.00&2.00&2.02\\ \hline
$\vep=1/2^2$&2.16E-1&2.61E-2&1.63E-3&1.02E-4&6.36E-6&3.98E-7&2.48E-8&1.55E-9\\
rate &--- &1.52&2.00&2.00&2.00&2.00&2.00&2.00\\ \hline
$\vep=1/2^3$&1.34E-1&6.07E-2&6.17E-3&3.89E-4&2.44E-5&1.52E-6&9.52E-8&5.91E-9\\
rate &--- &0.57&1.65&1.99&2.00&2.00&2.00&2.00\\ \hline
$\vep=1/2^4$&1.31E-1&1.12E-2&1.57E-2&1.54E-3&9.64E-5&6.04E-6&3.77E-7&2.34E-8\\
rate &--- &1.73&-0.20&1.67&2.00&2.00&2.00&2.00\\ \hline
$\vep=1/2^5$&1.32E-1&8.47E-3&1.66E-3&3.91E-3&3.86E-4&2.41E-5&1.51E-6&9.37E-8\\
rate &--- &1.98&1.18&-0.62&1.67&2.00&2.00&2.01\\ \hline
$\vep=1/2^6$&1.33E-1&8.09E-3&6.63E-4&3.61E-4&9.87E-4&9.66E-5&6.02E-6&3.75E-7\\
rate &--- &2.02&1.80&0.44&-0.73&1.68&2.00&2.00\\ \hline
$\vep=1/2^{7}$&1.34E-1&8.05E-3&5.01E-4&9.12E-5&8.78E-5&2.47E-4&2.42E-5&1.50E-6\\
rate &--- &2.03&2.00&1.23&0.03&-0.75&1.68&2.01\\ \hline
$\vep=1/2^{8}$&1.34E-1&8.05E-3&5.00E-4&3.64E-5&2.16E-5&2.20E-5&6.19E-5&6.01E-6\\
rate &--- &2.03&2.00&1.89&0.38&-0.01&-0.75&1.68\\ \hline
$\vep=1/2^{10}$&1.34E-1&8.06E-3&5.03E-4&3.17E-5&2.97E-6&2.02E-6&1.97E-6&1.99E-6\\
rate &--- &2.03&2.00&1.99&1.71&0.28&0.02&-0.01\\ \hline
$\vep=1/2^{20}$&1.34E-1&8.06E-3&5.02E-4&3.14E-5&1.96E-6&1.23E-7&7.66E-9&4.80E-10\\
rate &--- &2.03&2.00&2.00&2.00&2.00&2.00&2.00\\
\bottomrule
\end{tabular}
\end{center}
\caption{Temporal error analysis for  EWI-SP, with different
$\vep$ for Case II ($\alpha=0$),
with $\|e^n(\cdot)\|_{H^1}$.  The convergence rate
is calculated as $\log_2(\|e^n(\cdot,4\tau)\|_{H^1}/
\|e^n(\cdot,\tau)\|_{H^1})/2$.} \label{tab:temporal:0}
\end{table}

 \begin{table}[htb]
\begin{center}
\begin{tabular}{cccccccccc}\toprule
$\alpha=0$&$\ba{l}\vep=0.5\\ \tau=0.2\ea$&$\ba{l}\vep=0.5/2\\
\tau=0.2/4\ea$&$\ba{l}\vep=0.5/2^2\\ \tau=0.2/4^2\ea$ &
$\ba{l}\vep=0.5/2^3\\ \tau=0.2/4^3\ea$ & $\ba{l}\vep=0.5/2^4\\ \tau=0.2/4^4\ea$  & $\ba{l}\vep=0.5/2^5\\ \tau=0.2/4^5\ea$ \\ \hline
$\|e^n\|_{H^1}$&1.22E-1&2.61E-2&6.17E-3&1.54E-3&3.86E-4&9.66E-5\\
rate &--- &1.11&1.04&1.00&1.00&1.00\\\bottomrule
\end{tabular}
\end{center}
\caption{Degeneracy of convergence rate for EWI-SP,
{\it Case II} ($\alpha=0$). The convergence rate is calculated
as $\log_2(\|e^n(2^2\tau,2\vep)\|_{H^1}/\|e^n(\tau,\vep)\|_{H^1})/2$.} \label{tab3}
\end{table}

For the numerical experiments, the initial value is chosen as  $\psi_0(x)=\pi^{-1/4}e^{-x^2/2}$
 and $\omega^\vep(x)= e^{-x^2/2}$ in (\ref{eq:nls_wave}).
 The computational domain is chosen as $[a,b]=[-16,16]$. The `exact' solution is computed using the proposed scheme with  very fine mesh $h=1/128$ and
 time step $\tau=10^{-6}$.
  We study the following two cases of initial data:

{\it{Case I.}}  $\alpha=2$, i.e., the well-prepared initial data case.

{\it{Case II.}}  $\alpha=0$, i.e., the ill-prepared initial data case.

The errors are defined as $e^n\in Y_M$ and $e^n(x)\in X_M$ with $e_j^n=\psi(x_j,t_n)-\psi_j^n$  and $e^n(x)=\psi(t_n)-I_M(\psi^n)(x)$. We measure the $H^1$ norm of $e^n(x)$, i.e. $\|e^n(\cdot)\|_{H^1}=\|e^n(\cdot)\|_{L^2}+\|\nabla e^n(\cdot)\|_{L^2}$.

The errors are displayed at $t=1$. For spatial error analysis, we choose time step $\tau=10^{-6}$,  such that the error in time discretization can be neglected (cf. Tab. \ref{tab:spatial}). For temporal error analysis, we choose $h=1/32$ such that the spatial error can be ignored.

Tab. \ref{tab:spatial} depicts the spatial errors for both {\it{Case I}} and {\it{Case II}},
which clearly demonstrates that EWI-SP is uniformly  spectral accurate in $h$
for all $\vep\in(0,1]$. Tabs. \ref{tab:temporal:2} and \ref{tab:temporal:0}
present the temporal errors for {\it Case I} and {\it II}, respectively.
From Tab.  \ref{tab:temporal:2}, we can conclude that the temporal error
of EWI-SP is of second order uniformly in $\vep\in(0,1]$, for $\alpha=2$.
From Tab. \ref{tab:temporal:0} with $\alpha=0$, when time step $\tau$ is
small (upper triangle part of the table), second order convergence of
the temporal error is clear; when $\vep$ is small (lower triangle part of
the table), second order convergence of  the temporal error is also clear;
near the diagonal part where $\tau\sim \vep^2$,  degeneracy of
the convergence rate for the temporal error is observed. Tab \ref{tab3}
lists the degenerate convergence rate for  the temporal error with $\alpha=0$
at the parameter regime $\tau\sim \vep^2$, predicted by our error
estimates in Theorem \ref{thm:main2}. Numerical results clearly
confirms that  the temporal error of EWI-SP is of $O(\tau^2)$
and $O(\tau)$ uniformly in $\vep\in(0,1]$ for $\alpha=2$ and $\alpha=0$,
respectively, while  EWI-SP is uniformly spectral accurate in mesh
size $h$ for $\vep\in(0,1]$, for both well-prepared case $\alpha=2$ and
ill-prepared case $\alpha=0$.

\section{Conclusion}
We have proposed and analyzed an exponential wave integrator sine pseudospectral (EWI-SP) method   for the nonlinear Schr\"{o}dinger equation perturbed by the wave operator (NLSW) in one, two and three dimensions, where a  small dimensionless parameter $\vep\in(0,1]$ is used to describe the perturbation  strength. The  difficulty of the problem   is that the solution of NLSW oscillates in time at $O(\vep^2)$
wavelength with $O(\vep^4)$ and $O(\vep^2)$ amplitude  for well-prepared and ill-prepared
initial data, respectively, especially for $0<\vep\ll1$. We have proved the uniform spectral accuracy of EWI-SP in mesh size $h$, and uniform convergence rates of EWI-SP in time step $\tau$ at the order $O(\tau^2)$ and $O(\tau)$ for well-prepared and ill-prepared initial data, respectively, in $L^2$ norm and semi-$H^1$ norm. This improves the convergence results of finite difference methods for NLSW in \cite{Baoc}. Numerical results suggeste the error estimates are optimal.

\section*{Acknowledgments}
 Part of the work
 was  done while the authors were visiting the Institute for Mathematical
 Sciences, National University of Singapore,  in 2011.

\bigskip

%\section*{Appendix: proof of claim (\ref{nonlp})}

{\centerline{\large\bf{Appendix: proof of the claim (\ref{nonlp})}}}
\renewcommand{\theequation}{{A}.\arabic{equation}}
\setcounter{equation}{0}

 We notice that $\Tf$ is assumed to enjoy the properties of $\Tf_B$ (\ref{prop}).
Employing Cauchy inequality and the fact about the coefficients $|p_l^\pm|\lesssim\tau$, $|q_l^\pm|\lesssim \tau^2$ (Lemma \ref{lem:localerr}), we can find
\begin{align*}
\sum\limits_{l=1}^{M-1}|\widetilde{W}^{n,\pm}_l|^2\lesssim&\,
\tau^2\left[
\sum\limits_{l=1}^{M-1}\left|(\widetilde{\Tf(\psi(t_n))})_l-(\widetilde{\Tf(\psi^n)})_l\right|^2
+\tau^2\sum\limits_{l=1}^{M-1}\left|\delta_t^-(\widetilde{\Tf(\psi(t_n))})_l
-\delta_t^-(\widetilde{\Tf(\psi^n)})_l\right|^2\right].
\end{align*}
Estimating each term on the RHS above, noticing the assumption which implies $\Tf(\cdot)$ is global Lipschitz, we get
\begin{align*}
&\frac{b-a}{2}\sum\limits_{l=1}^{M-1}\left|(\widetilde{\Tf(\psi(t_n))})_l-(\widetilde{\Tf(\psi^n)})_l\right|^2
=\|I_M(\Tf(\psi(t_n))(\cdot)-I_M(\Tf(\psi^n))(\cdot)\|_{L^2}^2\\&
=h\sum\limits_{j=1}^{M-1}|\Tf(\psi(x_j,t_n))-\Tf(\psi_j^n)|^2\leq C_{\Tf} h\sum\limits_{j=1}^{M-1}|\psi(x_j,t_n)-\psi_j^n|^2=C_{\Tf}\|I_M(\psi(t_n))(\cdot)-I_M(\psi^n)(\cdot)\|_{L^2}^2\\
&\leq 2C_{\Tf}\left(\|I_M(\psi(t_n))(\cdot)-P_M(\psi(t_n))(\cdot)\|_{L^2}^2+
\|P_M(\psi(t_n))(\cdot)-I_M(\psi^n)(\cdot)\|_{L^2}^2\right)\lesssim \|e^n(\cdot)\|_{L^2}^2+h^{2m_0},
\end{align*}
where $C_{\Tf}$ only depends on $f(\cdot)$ and $M_1$. Similarly, making use of the properties of $\Tf$, we have
\begin{align*}
&\frac{b-a}{2}\sum\limits_{l=1}^{M-1}\tau^2\left|\delta_t^-(\widetilde{\Tf(\psi(t_n))})_l
-\delta_t^-(\widetilde{\Tf(\psi^n)})_l\right|^2
=h\tau^2\sum\limits_{j=1}^{M-1}\left|\delta_t^-\Tf(\psi(x_j,t_n))-\delta_t^-\Tf(\psi_j^n)\right|^2\\&
\leq \widetilde{C}_{\Tf} h\left(\sum\limits_{j=1}^{M-1}|\psi(x_j,t_n)-\psi_j^n|^2
+\sum\limits_{j=1}^{M-1}|\psi(x_j,t_{n-1})-\psi_j^{n-1}|^2\right)
\leq  \widetilde{C}_{\Tf} h
\sum\limits_{k=n-1}^n\sum\limits_{j=1}^{M-1}|\psi(x_j,t_k)-\psi_j^k|^2\\
&= \widetilde{C}_{\Tf}\sum\limits_{k=n-1}^n\|I_M(\psi(t_k))(\cdot)-I_M(\psi^k)(\cdot)\|_{L^2}^2\lesssim \|e^n(\cdot)\|_{L^2}^2+\|
e^{n-1}(\cdot)\|_{L^2}^2+h^{2m_0},
\end{align*}
where $\widetilde{C}_{\Tf}$ only depends on $f(\cdot)$ and $M_1$.
Combining all the above results together, we get the first conclusion in the claim  (\ref{nonlp}).
Similarly, using Lemma \ref{lem:sine}, we can get
\begin{align*}
\sum\limits_{l=1}^{M-1}|\mu_l|^2|\widetilde{W}^{n,\pm}_l|^2
\lesssim&
\tau^2\left[
\|\nabla[I_M\left(\Tf(\psi(t_n))-\Tf(\psi^n)\right)]\,\|_{L^2}^2
+\tau^2\|\nabla[I_M\left(\delta_t^-\Tf(\psi(t_n))
-\delta_t^-\Tf(\psi^n)\right)]\,\|_{L^2}^2\right].
\end{align*}
In view of Lemma \ref{lem:sine}, the semi-$H^1$ norms on the RHS are equivalent to the discrete semi-$H^1$ norm of the corresponding grid functions, and we can estimate the RHS by estimating the  corresponding discrete semi-$H^1$ norms, which has been essentially done in our recent work \cite{Baoc}. For example, with the assumptions made in the claim (\ref{nonlp}),  we have
\begin{align*}
&\delta_x^+\left(\Tf(\psi(x_j,t_n))-\Tf(\psi_j^n)\right)\\&=
\int_0^1(G(\varsigma_\theta(t_n))\delta_x^+\psi(x_j,t_n)+H(\varsigma_\theta(t_n))
\overline{\delta_x^+\psi(x_j,t_n)})\,d\theta-\int_0^1
(G(\varsigma_\theta^n)\delta_x^+\psi_j^n+H(\varsigma_\theta^n)
\overline{\delta_x^+\psi_j^n})\,d\theta\\
&=\int_0^1\left[\left(G(\varsigma_\theta(t_n))-G(\varsigma_\theta^n)\right)\delta_x^+\psi(x_j,t_n)+
\left(H(\varsigma_\theta(t_n))-H(\varsigma_\theta^n)\right)
\overline{\delta_x^+\psi(x_j,t_n)}\right]\,d\theta\\
&\quad +\int_0^1\left[G(\varsigma_\theta^n)(\delta_x^+\psi(x_j,t_n)-\delta_x^+\psi_j^n)+
H(\varsigma_\theta^n)
(\overline{\delta_x^+\psi(x_j,t_n)}-\overline{\delta_x^+\psi_j^n})\right]\,d\theta,\quad j=0,1,\ldots,M-1,
\end{align*}
where $\varsigma_\theta(t_n)=\theta\psi(x_{j+1},t_n)+(1-\theta)\psi(x_j,t_n)$ and
$\varsigma_\theta^n=\theta\psi_{j+1}^n+(1-\theta)\psi_j^n$ for $\theta\in[0,1]$, $H(z)$ and $G(z)$ are defined in (\ref{scheme1:2}). Using the global Lipschitz properties of $G(\cdot)$ and $H(\cdot)$ in the current case, then it is a direct consequence that \cite{Baoc}
\be
|\delta_x^+\left(\Tf(\psi(x_j,t_n))-\Tf(\psi_j^n)\right)|\lesssim |\delta_x^+(\psi(x_j,t_n)-\psi_j^n)|
+\sum\limits_{k=j}^{j+1}|\psi(x_k,t_n)-\psi_k^n|,\quad 0\leq j\leq M-1,
\ee
which implies
\be\label{eq:h11}
\|\delta_x^+\left(\Tf(\psi(x_j,t_n))-\Tf(\psi_j^n)\right)\|_{l^2}^2\lesssim
\|\delta_x^+(\psi(x_j,t_n)-\psi_j^n)\|_{l^2}^2
+\|\psi(x_j,t_n)-\psi_j^n\|^2_{l^2}.
\ee
Combining (\ref{eq:h11}) together with Lemma \ref{lem:sine}, we have
\begin{align*}
\|\nabla[I_M\left(\Tf(\psi(t_n))-\Tf(\psi^n)\right)]\|_{L^2}^2\lesssim&
 \|\nabla[I_M(\psi(t_n)-\psi^n)(\cdot)]\|_{L^2}^2+\|I_M(\psi(t_n)-\psi^n)(\cdot)\|^2_{L^2}\\
\lesssim &\|\nabla e^n(\cdot)\|_{L^2}^2+\|e^n(\cdot)\|_{L^2}^2+h^{2m_0-2}.
\end{align*}
The remaining term can be estimated in the same way  and we omit the details here for brevity. Finally, we can obtain
\be
\frac{b-a}{2}\sum\limits_{l=1}^{M-1}|\mu_l|^2|\widetilde{W}^{n,\pm}_l|^2
\lesssim \tau^2\sum\limits_{k=n-1}^n\left(\|\nabla e^k(\cdot)\|_{L^2}^2+\|e^k(\cdot)\|_{L^2}^2\right)+\tau^2h^{2m_0-2}.
\ee
It is obvious  that the constant in the inequality only depends on $f(\cdot)$, $M_1$ and $\|\psi\|_{L^\infty([0,T];H_s^{m_0})}$.


\begin{thebibliography}{}

\bibitem{Akrivis1}
{\sc G. Akrivis},
{\em Finite difference discretization of the
cubic Schr\"{o}dinger equation},
IMA J. Numer. Anal., 13 (1993), pp.~115--124.

%\bibitem{Akrivis}
%{\sc G. Akrivis, V. Dougalis and O. Karakashian},
%{\em On fully discrete
%Galerkin methods of second-order temporal accuracy for the nonlinear
%Schr\"{o}dinger equation},
%Numer. Math., 59 (1991), pp.~31--53.

\bibitem{Bao0}
{\sc W. Bao and Y. Cai},
{\em Optimal error estimates of finite difference
methods for the Gross-Pitaevskii equation with angular momentum rotation},
Math. Comp.,   82 (2013), pp.~99--128.

\bibitem{Baocc}
{\sc W. Bao and Y. Cai},
Mathematical theory and numerical methods for Bose-Einstein condensation,
Kinet. Relat. Mod.,  6 (2013), pp. 1-135.

\bibitem{Baoc}
{\sc W. Bao and Y. Cai},
{\em Uniform error estimates of finite difference
methods for the nonlinear Schr\"odinger equation with wave operator},
SIAM J. Numer. Anal.,  50 (2012), pp.~492--521.

\bibitem{Bao1}
{\sc W. Bao and X. Dong},
{\em Analysis and comparison of numerical methods for Klein-Gordon equation
in nonrelativistic limit regime},
 Numer. Math., 120 (2012), pp.~189--229.

\bibitem{BDX}
{\sc W. Bao, X. Dong and J. Xin},
{\em Comparisons between sine-Gordon equation and perturbed nonlinear
Schr\"{o}dinger equations for modeling light bullets beyond critical collapse},
Physica D,  239 (2010),  pp.~1120--1134.



\bibitem{Bao2}
{\sc W. Bao, D. Jaksch and P. A. Markowich},
{\em Numerical solution of the Gross-Pitaevskii
equation for Bose-Einstein condensation},
J. Comput. Phys., 187 (2003), pp.~318--342.

\bibitem{Colin1}
{\sc L. Berg\'{e} and T. Colin},
{\em A singular perturbation problem for an
enveloppe equation in plasma physics},
Physica D, 84 (1995), pp.~437--459.

  \bibitem{Besse}
{\sc C. Besse, B. Bid\'{e}garay and S. Descombes},
{\em Order estimates in time of splitting mehotds for
the nonlinear Schr\"{o}dinger equation},
SIAM J. Numer. Anal., 40 (2002), pp.~26--40.



\bibitem{Chang1}
{\sc Q. Chang, B. Guo and H. Jiang},
{\em Finite difference method for generalized Zakharov equations},
Math.  Comp., 64 (1995), pp.~537--553.


\bibitem{Colin2}
{\sc T. Colin and P. Fabrie},
{\em Semidiscretization in time for Schr\"{o}dinger-waves equations},
Discrete Contin. Dyn. Syst., 4 (1998), pp.~671--690.

\bibitem{Deb}
{\sc A. Debussche and E. Faou},
{\em Modified energy for split-step methods applied to
the linear Schr\"{o}dinger equations},
SIAM J. Numer. Anal., 47 (2009), pp.~3705--3719.

\bibitem{Deg}
{\sc P. Degond, J. Liu, M. Vignal},
{\em Analysis of an asymptotic preserving scheme
for the Euler-Poisson system in the quasineutral limit},
SIAM J. Numer. Anal.,  46 (2008), pp.~1298-1322.



\bibitem{Gau}
{\sc W. Gautschi},
{\em Numerical integration of ordinary differential equations based
on trigonometric polynomials},
 Numer. Math., 3 (1961), pp.~381--397.

\bibitem{Glassey}
{\sc R. T. Glassey},
{\em Convergence of an energy-preserving scheme
 for the Zakharov equations in one space dimension}, Math. Comp., 58 (1992), pp.~83--102.

\bibitem{Guo}
{\sc B. Guo  and H. Liang},
{\em On the problem of numerical calculation
for a class of systems of nonlinear Schr\"{o}dinger
equations with wave operator},
J. Numer. Methods Comput. Appl., 4 (1983), pp.~176--182.

\bibitem{Hardin}
{\sc R. H. Hardin and F. D. Tappert},
{\em Applications of the split-step Fourier method
to the numerical solution of nonlinear and
variable coefficient wave equations},
SIAM Rev. Chronicle, 15 (1973), pp.~423.


\bibitem{Hoch}
{\sc M. Hochbruck and C.  Lubich},
{\em A Gautschi-type method for oscillatory secondorder
differential equations},
 Numer. Math., 83 (1999), pp.~ 403--426.

\bibitem{Jin}
{\sc S. Jin},
{\em Efficient Asymptotic-Preserving (AP) schemes for some multiscale kinetic equations},
SIAM J. Sci. Comp., 21 (1999),  pp.~441--454.

\bibitem{Kreiss}
{\sc H. O. Kreiss and J. Oliger},
{\em Stability of the Fourier method},
SIAM J. Numer. Anal., 16 (1979), pp.~421--433.

\bibitem{Lubich}
{\sc C. Lubich},
{\em On splitting methods for Schr\"{o}dinger-Poisson and
cubic nonlinear Schr\"{o}dinger equations},
 Math. Comp.,  77 (2008), pp.~2141--2153.

\bibitem{Mach}
{\sc S. Machihara, K. Nakanishi and T. Ozawa},
{\em Nonrelativistic limit in the energy space
 for nonlinear Klein-Gordon equations},
Math. Ann., 322 (2002), pp.~603--621.

\bibitem{Neu}
{\sc C. Neuhauser and M. Thalhammer},
{\em On the convergence of splitting methods
for linear evolutionary Schr\"{o}dinger equations
involving an unbounded potential},
BIT, 49 (2009), pp.~199--215.

%\bibitem{Ohan}
%{\sc K. Ohannes and M. Charalambos},
%{\em A space-time finite element method for the
% nonlinear Schr\"{o}dinger equation: the continuous Galerkin method},
%SIAM J. Numer. Anal.,  36 (1999), pp.~1779--1807.

\bibitem{Robinson}
{\sc M. P. Robinson, G. Fairweather and B. M. Herbst},
{\em On the numerical solution of the cubic
Schr\"{o}dinger equation in one space
variable},
J. Comput. Phys.,  104 (1993), pp.~277--284.

\bibitem{Schoene}
{\sc A. Y. Schoene},
{\em On the nonrelativistic limits of the
 Klein-Gordon and Dirac equations},
J. Math. Anal. Appl., 71 (1979), pp.~36--47.

\bibitem{Shen}
{\sc J. Shen, T. Tang and L. Wang},
{\em Spectral methods: algorithms, analysis and applications},
Springer-Verlag, Berlin Heidelberg, 2011.
%\bibitem{Sulem}
%{\sc C. Sulem and P.-L. Sulem},
%{\em The nonlinear Schr\"{o}dinger equation, self-focusing and wave collapse},
%Springer-Verlag, New York, 1999.


\bibitem{Taha1}
{\sc T. R. Taha and M. J. Ablowitz},
{\em Analytical and numerical aspects of certain
nonlinear evolution equations, II. Numerical,
nonlinear Schr\"{o}dinger equation},
J. Comput. Phys., 55 (1984), pp.~203--230.

%\bibitem{Tha0}
%{\sc M. Thalhammer},
%{\em High-order exponential operator splitting
%methods for time-dependent Schr\"{o}dinger equations},
%SIAM J. Numer. Anal., 46 (2008), pp.~2022--2038.

\bibitem{Thomee}
{\sc V. Thom\'{e}e},
{\em Galerkin finite element methods for parabolic problems},
Springer-Verlag, Berlin, Heidelberg, 1997.

\bibitem{Tsu}
{\sc M. Tsutumi},
{\em Nonrelativistic approximation of nonlinear Klein-Gordon
equations in two space dimensions}, Nonlinear Analysis, 8 (1984), pp.~637--643.

\bibitem{Wang}
{\sc T. Wang and L. Zhang},
{\em Analysis of some new conservative schemes for nonlinear
Schr\"{o}dinger equation with wave operator},
Appl. Math.  Comput., 182 (2006), pp.~1780--1794.

\bibitem{Xin}
{\sc J. Xin},
{\em Modeling light bullets with the two-dimensional sine-Gordon equation},
Physica D, 135 (2000), pp.~345--368.

%\bibitem{Zhangl}
%{\sc L. Zhang and Q. Chang},
%{\em A conservative
%numerical scheme for a class of nonlinear Schr\"{o}dinger equation with
%wave operator},
%Appl. Math. Comput., 145 (2003), pp.~603--612.

\end{thebibliography}
\end{document}